\newtheorem{theorem}{Theorem}
\newtheorem{lemma}[theorem]{Lemma}
\theoremstyle{definition}
\newtheorem*{remark}{Remark}
\numberwithin{equation}{section}
\def\pp{\mathbb{P}}
\def\nn{\mathbb{N}}
\def\rr{\mathbb{R}}
\def\cc{\mathbb{C}}
\def\qq{\mathbb{Q}}
\def\gb{\mathfrak{B}}
\def\vf{\varphi}
\def\d{{\rm d}}
\def\meas{{\rm meas}}
\def\uZ{{\underline Z}}
\def\uO{{\underline \Omega}}
\def\uA{{\underline \alpha}}
\def\uhh{{\underline{h}}}
\def\uom{{\underline \omega}}
\def\ugb{{\underline \gb}}
\def\uH{{\underline H}}
\markboth{\today}{\today}
\begin{document}
\hfill\texttt{\jobname.tex}\qquad\today

\bigskip
\title[Mixed joint discrete universality III]
{On mixed joint discrete universality for a class of zeta-functions III}

\author{Roma Ka{\v c}inskait{\.e} and Kohji Matsumoto}

\address{R. Ka{\v c}inskait{\.e} \\
Department of Mathematics and Statistics, Faculty of Informatics, Vytautas Magnus University, Kaunas, Vileikos 8, LT-44404, Lithuania}
\email{roma.kacinskaite@vdu.lt}

\address {K. Matsumoto \\
Graduate School of Mathematics, Nagoya University, Furocho, Chi\-ku\-sa-ku, Nagoya
464-8602, Japan}
\email{kohjimat@math.nagoya-u.ac.jp}

\date{}

\begin{abstract}
We present the most general at this moment results on the discrete mixed joint value-distribution (Theorems~\ref{RK-KM-gen-lim} and \ref{RK-KM-gen-lim-h})  and the universality property (Theo\-rems~\ref{RK-KM-general-h} and \ref{RK-KM-general}) for the class of Matsumoto zeta-functions and periodic Hurwitz zeta-functions under certain linear independence condition on the relevant parameters, such as common differences of arithmetic progressions, prime numbers etc.
\end{abstract}

\maketitle

{\small{Keywords: {discrete shift, Matsumoto zeta-function, periodic Hurwitz zeta-function, simul\-ta\-neous approximation, Steuding class, value distribution, weak convergence, universality.}}}

{\small{AMS classification:}  11M06, 11M41, 11M35, 41A30, 30E10.}

\section{Introduction}\label{intro}

In analytic number theory, one of the most interesting and popular subjects is the so called universality property of various zeta- and $L$-functions in Voronin's sense (see \cite{SMV-1975}).    In particular the mixed joint universality is a rather hot topic today. Recall that the first results in this direction were obtained by J.~Sander and J.~Steuding   (see \cite{JST-JS-2006}) and independently by H.~Mishou (see \cite{HM-2007}). They proved that the pair of analytic functions is simultaneously approximable by the shifts of the Riemann zeta-function $\zeta(s)$ and the Hurwitz zeta-function $\zeta(s,\alpha)$ with certain real $\alpha$ (as it is well-known the function $\zeta(s)$ has the Euler type product expansion over primes, while the function $\zeta(s,\alpha)$ in general does not; from this facts the term ``mixed joint'' arises).

In the series of previous works by the authors (see \cite{RK-KM-2015}, \cite{RK-KM-2017-BAMS}, \cite{RK-KM-2017-Pal}, \cite{RK-KM-Pal-2018}
), the tuple consisting from the wide class of the Matsumoto zeta-functions $\varphi(s)$ and the periodic Hurwitz zeta-functions $\zeta(s,\alpha;\gb)$ is considered in the frame of the studies on the value distribution of such pair, as well as the universality property and the functional independence.

Let $s=\sigma+it$ be a complex variable and by $\pp, \nn, \qq, \rr$ and $\cc$ denote the set of all prime numbers, positive integers, rational numbers, real numbers and complex numbers, respectively. Suppose that $\alpha$, $0<\alpha\leq 1$, is a fixed real number. Recall the definitions of both the functions under our interest -- the Matsumoto zeta-functions ${\varphi}(s)$ and the periodic Hurwitz zeta-functions $\zeta(s,\alpha;\gb)$.

For $m\in\nn$, let $g(m)\in\nn$, $f(j,m)\in\nn$, $1\leq j\leq g(m)$.    Denote by $p_m$ the $m$th prime number, and let $a_m^{(j)}\in\cc$.
Assume that $g(m)\leq C_1 p_m^{\alpha}$ and $|a_m^{(j)}|\leq p_m^{\beta}$ with positive constant $C_1$ and non-negative  constants $\alpha$ and $\beta$.
Define the zeta-function ${\widetilde\varphi}(s)$ by the polynomial Euler product
$$
\widetilde{\varphi}(s)=\prod_{m=1}^{\infty}\prod_{j=1}^{g(m)}\left(1-a_m^{(j)} p_m^{-sf(j,m)}\right)^{-1}.
$$
The function $\widetilde{\varphi}(s)$ converges absolutely in the region $\sigma>\alpha+\beta+1$, and, in this region, it can be written as the Dirichlet series
$$
\widetilde{\varphi}(s)=\sum_{k=1}^{\infty}\frac{{\widetilde{c}}_k}{ k^{s+\alpha+\beta}}
$$
with coefficients ${\widetilde{c}}_k$ such that ${\widetilde{c}}_k=O\big(k^{\alpha+\beta+\varepsilon}\big)$  if all prime factors of $k$ are large for every positive $\varepsilon$ (for the comment, see \cite{RK-KM-2017-BAMS}).

For brevity, denote the shifted version of the function $\widetilde{\varphi}(s)$ as 
$$
\varphi(s):=\widetilde{\varphi}(s+\alpha+\beta)=\sum_{k=1}^{\infty}\frac{{\widetilde{c}}_k}{ k^{s+\alpha+\beta}}=\sum_{k=1}^{\infty}\frac{{c}_k}{ k^{s}}
$$
with $c_k:={\widetilde{c}}_k k^{-\alpha-\beta}$.   This series is convergent in the half-plane  $\sigma>1$.

Suppose that the function $\varphi(s)$ satisfies the following conditions:
\begin{enumerate}
	\item[(i)] $\varphi(s)$ can be continued meromorphically to $\sigma\geq\sigma_0$, where $\frac{1}{2}\leq\sigma_0<1$, and all poles of $\varphi(s)$ in this region
	are included in a
	compact set which has no intersection with the line $\sigma=\sigma_0$;
	\item[(ii)] for $\sigma\geq\sigma_0$, $\varphi(\sigma+it)=O(|t|^{C_2})$
	with a certain $C_2>0$;
	\item[(iii)] it holds the mean-value estimate
	\begin{align}\label{meanvalue-for-varphi}
	\int_0^T |\varphi(\sigma_0+it)|^2 dt =O(T).
	\end{align}
	.
\end{enumerate}

\noindent We call the set of all such functions $\varphi(s)$ as the class of Matsumoto zeta-functions and denote by $\mathcal{M}$.

Now, for $\mathbb{N}_0:=\mathbb{N}\cup \{0\}$, let  $\mathfrak{B}=\{ b_m: m \in \mathbb{N}_0\}$  be a periodic sequence of complex numbers $b_{m}$ with minimal positive period $l\in \mathbb{N}$. The periodic Hurwitz zeta-function $\zeta(s,\alpha; \mathfrak{B})$ with parameter $\alpha \in \rr$, $0<\alpha\leq 1$, is given  by the Dirichlet series
$$
\zeta(s,\alpha; \mathfrak{B})= \sum_{m=0}^{\infty} \frac{{b_m}}{{(m+\alpha)^{s}}} \quad \text{for} \quad \sigma>1.
$$
It is known that the function $\zeta(s,\alpha; \mathfrak{B})$ is analytically continued to the whole complex plane, except for a possible simple pole at the point $s=1$ with residue
$
b:= \frac{1}{l} \sum_{m=0}^{l-1} b_m.
$
If $b=0$, then $\zeta(s,\alpha; \mathfrak{B})$ is an entire function.

It is possible to prove functional limit theorems for the whole class $\mathcal M$, but it is  difficult to prove the denseness lemma which is necessary to prove the universality theorem. Therefore in the proof of the universality we use an assumption that the function $\varphi(s)$ belongs to the Steuding class ${\widetilde{S}}$ defined below.

We say that the function $\varphi(s)$ belongs to the class ${\widetilde{S}}$ if the following conditions are satisfied:
\begin{itemize}
	\item[(a)] there exists a Dirichlet series expansion
	$
	\varphi(s)=\sum_{m=1}^{\infty}a(m)m^{-s}
	$
	with $a(m)=O(m^\varepsilon)$ for every $\varepsilon>0$;
	\item[(b)] there exists $\sigma_\varphi<1$ such that $\varphi(s)$ can be meromorphically continued to the half-plane $\sigma>\sigma_\varphi$;
	\item[(c)] for every fixed $\sigma>\sigma_\varphi$ and $\varepsilon>0$, the order estimate
	$
	\varphi(\sigma+it)=O(|t|^{C_3+\varepsilon})
	$
	with $C_3\geq 0$ holds;
	\item[(d)] there exists the Euler product expansion over primes, i.e.,
	$$
	\varphi(s)=\prod_{p \in \mathbb{P}}\prod_{j=1}^{J}\left(1-\frac{{a_j(p)}}{{p^{s}}}\right)^{-1};
	$$
	\item[(e)] there exists a constant $\kappa>0$ such that
	$$
	\lim_{x \to \infty}\frac{1}{\pi(x)}\sum_{p \leq x}|a(p)|^2=\kappa,
	$$
	where $\pi(x)$ denotes the number of primes $p$ up to $x$.
\end{itemize}

For $\varphi(s)\in {\widetilde{S}}$, let $\sigma^*$ be the infimum of all $\sigma_1$ such that
$$
\frac{1}{2T}\int_{-T}^{T}|\vf(\sigma+it)|^2 \d t \sim \sum_{m=1}^{\infty}\frac{|a(m)|^2}{m^{2\sigma}}
$$
holds for any $\sigma \geq \sigma_1$. Then $\frac{1}{2}\leq \sigma^*<1$, and we see that ${\widetilde{S}} \subset\mathcal{M}$ if  $\sigma_0=\sigma^*+\varepsilon$ is chosen. Note that the class $\widetilde S$ is not a subclass of the Selberg class!

To formulate mixed joint discere limit theorems and universality property  for the tuple $\big(\varphi(s),\zeta(s,\alpha;\gb)\big)$ and further results, we need some notation. For any compact set $K \subset \cc$, denote by $H^c(K)$ the set of all $\cc$-valued functions defined on $K$, continuous on $K$ and holomorphic in the interior of $K$. By $H_0^c(K)$ denote the subset of $H^c(K)$ such that, on $K$, all elements of $H^c(K)$  are non-vanishing.
Let $D(a,b)=\{s \in \cc: \; a<\sigma<b\}$ for $a,b \in \rr$, $a<b$, and we denote by $\meas \{A\}$ the Lebesgue measure of the measurable set $A\subset \rr$.
For any set $S$,  ${\mathcal{B}}(S)$ denotes the set of all Borel subsets of $S$, and, for any region $D$, $H(D)$ denotes the set of all holomorphic functions on $D$.

The first result on the mixed joint universality of the pair $\big(\varphi(s),\zeta(s,\alpha;\gb)\big)$ is the following theorem, which considers the situation when the shifting parameter is moving continuously.

\begin{theorem}[Theorem 2.2, \cite{RK-KM-2015}]\label{RK-KM-univ-2015}
Suppose $\varphi\in\widetilde{S}$, and $\alpha$ is a transcendental number, $0<\alpha <1$. Let $K_1$ be a compact subset of the strip $D\big(\sigma^*,1\big)$, $K_2$ be a
compact subset of the strip $D\big(\frac{1}{2},1\big)$, both with connected complements. Suppose that $f_1(s) \in H_0^c(K_1)$, $f_2(s) \in H^c(K_2)$. Then, for every $\varepsilon>0$, it holds that
	\begin{align*}
	\liminf_{T \to \infty}\frac{1}{T}\meas \bigg\{\tau \in [0,T]: & \sup_{s \in K_1}|\varphi(s+i\tau)-f_1(s)|<\varepsilon, \cr
	&\sup_{s \in K_2}|\zeta(s+i\tau,\alpha;\gb)-f_2(s)|<\varepsilon\bigg\}>0.
	\end{align*}
\end{theorem}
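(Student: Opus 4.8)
The plan is to derive the theorem from a joint functional limit theorem for $\big(\vf(s+i\tau),\zeta(s+i\tau,\alpha;\gb)\big)$, viewed as a random element of $H(D_1)\times H(D_2)$ with $D_1=D(\sigma^*,1)$ and $D_2=D(\tfrac12,1)$, together with an explicit description of the support of the limiting law, and then to finish in the classical Bagchi--Voronin way via Mergelyan's approximation theorem. For the limit theorem I would work on the torus $\Omega=\Omega_1\times\Omega_2$, where $\Omega_1=\prod_{p\in\pp}\gamma$, $\Omega_2=\prod_{m\in\no}\gamma$, $\gamma=\{z\in\cc:|z|=1\}$, equipped with the product Haar probability measure $m_H$, writing $\omega=(\omega_1,\omega_2)$ and extending $\omega_1$ completely multiplicatively to $\nn$; put $\vf(s,\omega_1)=\prod_p\prod_{j=1}^{J}\big(1-a_j(p)\omega_1(p)p^{-s}\big)^{-1}$ (equivalently $\sum_m a(m)\omega_1(m)m^{-s}$ on $D_1$) and $\zeta(s,\alpha;\omega_2,\gb)=\sum_{m\ge0}b_m\omega_2(m)(m+\alpha)^{-s}$. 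The arithmetic input is that the transcendence of $\alpha$ makes $\{\log p:p\in\pp\}\cup\{\log(m+\alpha):m\in\no\}$ linearly independent over $\qq$, so that the curve $\tau\mapsto\big((p^{-i\tau})_p,((m+\alpha)^{-i\tau})_m\big)$ is uniformly distributed in $\Omega$ and the two coordinate blocks decouple. The weak convergence
\[
\tfrac{1}{T}\meas\big\{\tau\in[0,T]:\big(\vf(s+i\tau),\zeta(s+i\tau,\alpha;\gb)\big)\in A\big\}\longrightarrow P(A),\qquad A\in\mathcal{B}\big(H(D_1)\times H(D_2)\big),
\]
with $P$ the distribution of $\big(\vf(s,\omega_1),\zeta(s,\alpha;\omega_2,\gb)\big)$, then follows by the routine chain of steps: transfer the problem to $\Omega$, approximate by Dirichlet polynomials using the mean value bound (iii) for $\vf$ and the classical second moment bound for $\zeta(s,\alpha;\gb)$ on $\sigma>\tfrac12$, and invoke the continuity theorem; this part is essentially contained in the authors' earlier papers.

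Next I would identify the support: $\operatorname{supp}P=\mathcal{S}:=\big(\{g\in H(D_1):g(s)\neq0\ \text{on}\ D_1\}\cup\{0\}\big)\times H(D_2)$. Since $\omega_1$ and $\omega_2$ are $m_H$-independent, $\operatorname{supp}P$ is the product of the two marginal supports, so the factors can be treated separately. For $\zeta(s,\alpha;\omega_2,\gb)$ the support is all of $H(D_2)$ --- a Bagchi-type fact resting on the density in $H(D_2)$ of the linear span of $\{(m+\alpha)^{-s}\}_{m\ge0}$, which uses the analytic continuation and polynomial growth of $\zeta(s,\alpha;\gb)$ and $0<\alpha\le1$. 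For $\vf(s,\omega_1)$ I would use $\vf\in\st$: on $D_1$ one has $\log\vf(s,\omega_1)=\sum_p g_p(s,\omega_1(p))$, a sum of independent $H(D_1)$-valued random elements, and conditions (c) and (e) --- above all the positivity of the density $\kappa$ in (e) --- give both almost sure convergence of this series and density in $H(D_1)$ of the closed linear span of its summands; a theorem on the support of a sum of independent random elements, together with a Pecherskii-type rearrangement argument, then yields $\operatorname{supp}\sum_p g_p=H(D_1)$, and exponentiating followed by Hurwitz's theorem gives $\operatorname{supp}\vf(\cdot,\omega_1)=\{g\in H(D_1):g\neq0\}\cup\{0\}$.

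Finally, since $K_1$ has connected complement and $f_1\in H_0^c(K_1)$ is zero-free, $\log f_1$ is a well-defined element of $H^c(K_1)$, so by Mergelyan's theorem there is a polynomial $p(s)$ with $\sup_{s\in K_1}|f_1(s)-e^{p(s)}|<\varepsilon/2$, where $e^{p}\in H(D_1)$ is zero-free; likewise there is a polynomial $q(s)$ with $\sup_{s\in K_2}|f_2(s)-q(s)|<\varepsilon/2$. Then $(e^{p},q)\in\mathcal{S}$, hence
\[
G:=\big\{(g_1,g_2)\in H(D_1)\times H(D_2):\sup_{s\in K_1}|g_1(s)-e^{p(s)}|<\tfrac{\varepsilon}{2},\ \sup_{s\in K_2}|g_2(s)-q(s)|<\tfrac{\varepsilon}{2}\big\}
\]
is an open neighbourhood of a support point, so $P(G)>0$; since $G$ is open, the limit theorem gives $\liminf_{T\to\infty}\tfrac{1}{T}\meas\{\tau\in[0,T]:(\vf(s+i\tau),\zeta(s+i\tau,\alpha;\gb))\in G\}\ge P(G)>0$, and combining membership in $G$ with the two approximation bounds through the triangle inequality shows that this set of $\tau$ lies inside the one in the statement. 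I expect the real difficulty to be the support computation for the $\vf$-coordinate, i.e.\ the ``denseness lemma'': this is exactly where the hypothesis $\vf\in\st$ --- rather than merely $\vf\in\mathcal{M}$ --- is indispensable, since it is the Euler product (d) together with the mean-square condition (e) that force the relevant span to be dense in $H(D_1)$; by contrast the limit theorem, Mergelyan's theorem, and the independence coming from the transcendence of $\alpha$ are comparatively standard.
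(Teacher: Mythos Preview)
This statement is not proved in the present paper---it is quoted from \cite{RK-KM-2015} as background---so there is no in-paper proof to compare against directly. Your outline is nonetheless correct and matches both the argument of \cite{RK-KM-2015} and the template the present paper follows for its discrete analogues (Theorems~\ref{RK-KM-general-h} and \ref{RK-KM-general}): a joint functional limit theorem obtained via the torus $\Omega_1\times\Omega_2$ and absolutely convergent approximants, identification of the support as $S_\varphi\times H(D_2)$ through the product structure of the Haar measure (Steuding's denseness lemma for the $\varphi$-factor, the Laurin\v cikas result for the periodic Hurwitz factor), and the Mergelyan/portmanteau finish. Your diagnosis that the Steuding-class hypotheses (d) and (e) are needed precisely for the $\varphi$-support computation, while the limit theorem itself only needs $\varphi\in\mathcal{M}$, also agrees with what the paper says explicitly. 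The only cosmetic discrepancy is that the paper (see Lemma~\ref{RK-support}) works on bounded rectangles $D_M\subset D(\sigma^*,1)$ and $D_T\subset D(\tfrac12,1)$ containing $K_1,K_2$ rather than on the full open strips you chose for $D_1,D_2$; this localisation is harmless for the conclusion and sidesteps any issue with possible poles of $\varphi$.
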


More interesting and complicated questions on mixed joint universality are concerning the discrete case, when shifting parameters take discrete values.   The first result of such kind is the discrete analogue of Theorem~\ref{RK-KM-univ-2015}, which was proved by the authors in \cite{RK-KM-2017-Pal}.

Let $h>0$ be the shifting parameter, that is the common difference of the relevant arithmetic progression, and put
\begin{align*}
L(\alpha,h):=\left\{\log p: p \in \pp\right\}\cup\left\{\log (m+\alpha): m \in \nn_0\right\}\cup\left\{\frac{2\pi}{h}\right\}.
\end{align*}

\begin{theorem}[Theorem~3, \cite{RK-KM-2017-Pal}]\label{RK-KM-disc-univ}
	Suppose $\varphi\in\widetilde{S}$, and that the elements of the set $L(\alpha,h)$ are linearly independent over $\qq$.
	Let $K_1$, $K_2$, $f_1(s)$, $f_2(s)$ be the same as in Theorem~\ref{RK-KM-univ-2015}.
	Then, for every $\varepsilon>0$, it holds that
	\begin{align*}
	\liminf_{N \to \infty}\frac{1}{N+1}\# \bigg\{0\leq k\leq N: & \sup_{s \in  K_1}|\varphi(s+ikh)-f_1(s)|<\varepsilon, \cr
	&\sup_{s \in K_2}|\zeta(s+ikh,\alpha;\gb)-f_2(s)|<\varepsilon\bigg\}>0.
	\end{align*}
\end{theorem}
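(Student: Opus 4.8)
The plan is to follow the now-standard probabilistic scheme of Bagchi: first establish a discrete joint limit theorem in a space of analytic functions, then identify the support of the limiting measure, exploiting that $\vf\in\widetilde{S}$, and finally combine the two with Mergelyan's approximation theorem. \emph{Step 1 (discrete limit theorem).} Put $D_1=D(\sigma^*,1)$ and $D_2=D(\tfrac12,1)$, and equip $H(D_1)\times H(D_2)$ with the topology of uniform convergence on compacta. Let $\Omega_1=\prod_{p\in\pp}\gamma_p$ and $\Omega_2=\prod_{m\in\no}\gamma_m$ be products of unit circles with their Haar probability measures, and set $\Omega=\Omega_1\times\Omega_2$ with Haar measure $m_H$. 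The linear independence over $\qq$ of the elements of $L(\alpha,h)$ — crucially including the term $2\pi/h$, which governs the discrete shift — implies, via Weyl's criterion on the infinite-dimensional torus, that the sequence
$$\Big(\big(p^{-ikh}\big)_{p\in\pp},\ \big((m+\alpha)^{-ikh}\big)_{m\in\no}\Big),\qquad k=0,1,2,\dots,$$
is uniformly distributed in $\Omega$. Introducing the $H(D_1)$-valued random element $\vf(s,\omega_1)$ (the analytic continuation of the randomized Euler product attached to $\omega_1\in\Omega_1$) and the $H(D_2)$-valued random element $\zeta(s,\alpha,\omega_2;\gb)=\sum_{m\ge 0}b_m\omega_2(m)(m+\alpha)^{-s}$, one shows that
$$P_N(A):=\frac{1}{N+1}\#\Big\{0\le k\le N:\ \big(\vf(s+ikh),\zeta(s+ikh,\alpha;\gb)\big)\in A\Big\}$$
converges weakly, as $N\to\infty$, to the distribution $P$ of $\big(\vf(s,\omega_1),\zeta(s,\alpha,\omega_2;\gb)\big)$ on $\big(H(D_1)\times H(D_2),\ \mathcal{B}(H(D_1)\times H(D_2))\big)$. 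The routine ingredients are: approximation of both functions by finite Dirichlet polynomials together with their randomized analogues; the mean value bound (iii) for $\vf$ and the classical one for $\zeta(s,\alpha;\gb)$, transferred to the discrete setting by Gallagher's lemma so that the truncation error is controlled uniformly in $k$; and a tightness argument. This is precisely the discrete limit theorem already obtained in \cite{RK-KM-2017-Pal}, which may be quoted directly.

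\emph{Step 2 (support of $P$).} Since the Haar measure on $\Omega=\Omega_1\times\Omega_2$ is a product measure, the components $\vf(s,\omega_1)$ and $\zeta(s,\alpha,\omega_2;\gb)$ are independent, so the support of $P$ is the product $S_1\times S_2$ of the supports of the two marginals. For the second component, $\zeta(s,\alpha,\omega_2;\gb)$ is a sum of independent $H(D_2)$-valued random elements subject to no multiplicative constraint, and a now-classical argument (divergence of the relevant series together with a conditional-rearrangement lemma for Hilbert-space-valued series) gives $S_2=H(D_2)$. For the first component, the hypothesis $\vf\in\widetilde{S}$ is exactly what makes the argument work: conditions (a)–(d) put us in the Steuding framework, while condition (e) — the prescribed mean square of $|a(p)|$ over primes — supplies the positivity needed in the rearrangement lemma, yielding that $S_1$ consists of all $g\in H(D_1)$ that are either non-vanishing on $D_1$ or identically zero. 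This denseness lemma is the substantial point and, as remarked in the text, is the reason we must assume $\vf\in\widetilde{S}$ rather than merely $\vf\in\mathcal{M}$.

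\emph{Step 3 (conclusion).} Fix $\varepsilon>0$. Since $K_1$ has connected complement and $f_1\in H_0^c(K_1)$, we may write $f_1=e^{g}$ with $g$ holomorphic on a neighbourhood of $K_1$ and, by Mergelyan's theorem, approximate $g$ by a polynomial; thus there is a non-vanishing $\hat f_1\in H(D_1)$ with $\sup_{s\in K_1}|\hat f_1(s)-f_1(s)|<\varepsilon/2$. Likewise Mergelyan's theorem yields $\hat f_2\in H(D_2)$ with $\sup_{s\in K_2}|\hat f_2(s)-f_2(s)|<\varepsilon/2$. By Step 2 the pair $(\hat f_1,\hat f_2)$ lies in the support $S_1\times S_2$ of $P$, so the open set
$$G=\Big\{(g_1,g_2):\ \sup_{s\in K_1}|g_1(s)-\hat f_1(s)|<\tfrac{\varepsilon}{2},\ \sup_{s\in K_2}|g_2(s)-\hat f_2(s)|<\tfrac{\varepsilon}{2}\Big\}$$
has $P(G)>0$. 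By the weak convergence of Step 1 and the Portmanteau theorem applied to the open set $G$,
$$\liminf_{N\to\infty}P_N(G)\ \ge\ P(G)\ >\ 0,$$
and since the triangle inequality shows the counting set in the statement contains the indices $k$ counted by $P_N(G)$, the claimed $\liminf>0$ follows.

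The main obstacle is Step 2, specifically the denseness/support lemma for the randomized Matsumoto zeta-function: handling a conditionally convergent series of $H(D_1)$-valued random elements and showing that its support exhausts all non-vanishing analytic functions. This is precisely where properties (a)–(e) of the Steuding class $\widetilde{S}$ (rather than the weaker class $\mathcal{M}$) are indispensable; the remaining steps are routine adaptations of the probabilistic method, with the linear independence of $L(\alpha,h)$ — notably the $2\pi/h$ term — guaranteeing both the equidistribution in $\Omega$ and the independence of the two probabilistic components.
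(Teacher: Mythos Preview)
Your proposal is correct and follows essentially the same scheme as the paper: a discrete joint limit theorem on the torus (driven by the linear independence of $L(\alpha,h)$), identification of the support as $S_\varphi\times H(D_2)$ via the Steuding-class hypothesis, and the final deduction through Mergelyan and the Portmanteau theorem. In the present paper this statement is quoted as a prior result (Theorem~3 of \cite{RK-KM-2017-Pal}), and the paper proves the more general Theorems~\ref{RK-KM-general-h} and~\ref{RK-KM-general} by exactly the method you outline, with Theorem~\ref{RK-KM-disc-univ} being the special case $r=1$, $l(1)=1$.
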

\noindent Here and in what follows, $\# \{A\}$ denotes the cardinality of the set $A$.

In \cite{RK-KM-Pal-2018}, we have extended our investigations and studied the case when common differences of arithmetic progressions for both of the zeta-functions in the tuple are dif\-fe\-rent.

The aim of this paper is to show more general results than those mentioned above. Here we prove two mixed joint universality theorems -- in the cases of the same and different common differences of arithmetic progressions.
The novelty  is that a wide collection of the periodic Hurwitz zeta-functions 
$\zeta(s,\alpha_j;\gb_{jl})$ will be constructed; here for each $\alpha_j$ a collection 
of sequences $\gb_{jl}$ is attached.   This type of general collection of periodic
Hurwitz zeta-functions has been studied in several previous articles (such as
\cite{AL-2010}, \cite{AL-SS-2009}, \cite{JG-RM-SR-DS-2010} and \cite{RK-KM-2017-BAMS}) 
in the continuous case, 
but it seems that there is no previous work in the discrete case.
In fact, the results in the present article are discrete analogous of Theorem~4.2 from \cite{RK-KM-2017-BAMS}, but in \cite{RK-KM-2017-BAMS} we assume the stronger hypothesis,
that is the  algebraic independence of the parameters $\alpha_1,...,\alpha_{r}$.

\medskip

Suppose that $l(j)$ is a positive integer, $j=1,\ldots,r$, and $\lambda=l(1)+\ldots+l(r)$. For each $j$ and $l$, $j=1,...,r$, $l=1,...,l(j)$, let
${\mathfrak{B}}_{jl}=\{b_{mjl}\in\mathbb{C}: m \in \mathbb{N}_0\}$ be a periodic sequence of complex numbers  $b_{mjl}$ (not all zero) with the minimal period $k_{jl}$. Suppose that $\alpha_j$ be a real number such that $\alpha_j\in (0,1)$, $j=1,\ldots,r$.
Let $\zeta(s,\alpha_j;{\mathfrak{B}}_{jl})$ be the corresponding periodic Hurwitz zeta-function for $j=1,...,r$, $l=1,...,l(j)$.
Moreover, let $k_j$ the least common multiple of periods $k_{j1},\ldots,k_{jl(j)}$, and
$$
B_j:=\begin{pmatrix}
b_{1j1}  & b_{1j2} & \ldots & b_{1jl(j)}\cr
b_{2j1}  & b_{2j2} & \ldots & b_{2jl(j)}\cr
\ldots & \ldots & \ldots  & \ldots \cr
b_{k_j j1}  & b_{k_j j2} & \ldots & b_{k_j jl(j)}\cr
\end{pmatrix}, \quad j=1,...,r.
$$

Now let $\uA:=(\alpha_1,...,\alpha_r)$, $h>0$, and
let $\uhh:=\big(h_1,h_{21},...,h_{2r}\big)$ for $h_1>0$ and $h_{2j}>0$ 
with $j=1,..,r$.   Define two sets
\begin{align*}
L(\uA,h) :=&L\big(p,\alpha_1,...,\alpha_r,h\big)\cr
=&\{\log p: p \in \pp\}\cup\{\log(m+\alpha_j): m\in \nn_0, j=1,...,r\}\cup\left\{\frac{2\pi}{h}\right\},
\end{align*}
and
\begin{align*}
L(\uA,\uhh) :=&L\big(p,\alpha_1,...,\alpha_r,h_1,h_{21},...,h_{2r}\big)\cr
=&\{h_1 \log p: p \in \pp\}\bigcup_{j=1}^{r}\{h_{2j}\log(m+\alpha_j): m\in \nn_0\}\cup\{\pi\}.
\end{align*}

Our first new discrete mixed joint universality theorem is as follows.
\begin{theorem}\label{RK-KM-general-h}
	Suppose that the elements of the set $L(\uA,h)$ are linearly independent over ${\mathbb{Q}}$, ${\rm rank}B_j=l(j)$, $j=1,...,r$, and $\varphi(s)$ belongs to the class ${\widetilde S}$.
	Let $K_1$ be a compact subset of $D(\sigma^*,1)$, $K_{2jl}$ be compact subsets of 	$D(\frac{1}{2},1)$, $l=1,...,l(j)$,	all of them with connected complements.
	Suppose that $f_1\in H_0^c(K_1)$ and $f_{2jl}\in H^c(K_{2jl})$.
	Then, for every $\varepsilon>0$, it holds that
	\begin{align*}
	\liminf\limits_{N \to \infty}\frac{1}{N+1}\bigg\{0\leq k\leq N: &
	\sup\limits_{s \in K_1}|\varphi(s+ikh)-f_1(s)|<\varepsilon, \\
	& \sup\limits_{1\leq j\leq r}\sup\limits_{1\leq l\leq l(j)}
	\sup\limits_{s\in K_{2jl}}|\zeta(s+ikh,\alpha_j;{\mathfrak{B}}_{jl})-f_{2jl}(s)|<
	\varepsilon\bigg\}>0.
	\end{align*}
\end{theorem}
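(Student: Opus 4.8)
The plan is to follow the classical Voronin-type scheme adapted to the discrete mixed joint setting, in the same spirit as the proof of Theorem~\ref{RK-KM-disc-univ}, but now with the vector of parameters $\uA=(\alpha_1,\dots,\alpha_r)$ and the collections $\gb_{jl}$. The backbone is a discrete limit theorem on a suitable function space followed by a support computation and a standard Mergelyan/Pechersky-type argument.

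First I would set up the probabilistic framework. Let $\Omega_1=\prod_{p\in\pp}\gamma_p$, where each $\gamma_p$ is the unit circle, and for each $j=1,\dots,r$ let $\Omega_{2j}=\prod_{m\in\nn_0}\gamma_m$ be another infinite torus; put $\Omega=\Omega_1\times\prod_{j=1}^r\Omega_{2j}$. This is a compact abelian group, so it carries a normalized Haar measure $m_H$, which factors as the product of the Haar measures on the components. The key arithmetic input is the hypothesis that the elements of $L(\uA,h)$ are linearly independent over $\qq$: this guarantees that the sequence $\big(\{kh\log p/2\pi\}_p,\{kh\log(m+\alpha_j)/2\pi\}_{m,j}\big)_{k\ge0}$ is uniformly distributed in $\Omega$, via the Weyl criterion (the denseness of $2\pi/h$ in the $\qq$-span relation is what handles the discreteness). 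From this one obtains, by the standard Fourier-analytic argument, a discrete limit theorem: the measures
\[
P_N(A)=\frac{1}{N+1}\#\{0\le k\le N:\ (p^{-ikh})_p,\ ((m+\alpha_j)^{-ikh})_{m,j}\in A\}
\]
converge weakly to $m_H$ as $N\to\infty$. I would then lift this to the function space $H(D_1)\times\prod_{j,l}H(D_{2jl})$, where $D_1=D(\sigma^*,1)$ and $D_{2jl}=D(1/2,1)$, by composing with the map sending $\omega=(\omega_1,(\omega_{2j})_j)\in\Omega$ to $\big(\varphi(s,\omega_1),\,\zeta(s,\alpha_j;\gb_{jl},\omega_{2j})\big)$, where $\varphi(s,\omega_1)=\prod_m\prod_j(1-a_m^{(j)}\omega_1(p_m)^{f(j,m)}p_m^{-sf(j,m)})^{-1}$ (using the Euler product structure available since $\varphi\in\st$) and $\zeta(s,\alpha_j;\gb_{jl},\omega_{2j})=\sum_m b_{mjl}\omega_{2j}(m)(m+\alpha_j)^{-s}$. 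A mean-square/ergodicity argument (using conditions (a)--(c), (iii) and the mean value estimate) shows this map is well-defined $m_H$-a.e.\ and continuous in the appropriate sense, yielding a limit measure $P$ on the product function space.

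Next comes the support computation, which I expect to be the main obstacle. One must show that the support of $P$ is the full set $S_1\times\prod_{j,l}H(D_{2jl})$, where $S_1=\{g\in H(D_1):\ g(s)\ne0\ \text{or}\ g\equiv0\}$. For the $\varphi$-component this is exactly the Steuding-class denseness lemma (condition (e) with $\kappa>0$ is what powers the relevant probabilistic density argument for Euler products), and for the periodic Hurwitz components the condition $\operatorname{rank}B_j=l(j)$ is precisely what makes the collection $\{\zeta(s,\alpha_j;\gb_{jl})\}_{l=1}^{l(j)}$ jointly "independent enough" so that their random shifts fill out a product of copies of $H(D_{2jl})$; this is the analogue of the support computation in \cite{RK-KM-2017-BAMS}, Theorem~4.2, and the main technical work is to run it with the weaker hypothesis (linear independence of $L(\uA,h)$ over $\qq$) in place of algebraic independence of the $\alpha_j$. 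The crucial point is that the full torus $\Omega$ splits as a product over the distinct groups attached to $\{\log p\}$ and to each $\{\log(m+\alpha_j)\}$, and the $\qq$-linear independence of $L(\uA,h)$ ensures these pieces are genuinely independent so the supports multiply.

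Finally, the deduction of universality is routine. Given the compact sets $K_1,K_{2jl}$ with connected complements and target functions $f_1\in H_0^c(K_1)$, $f_{2jl}\in H^c(K_{2jl})$, one uses Mergelyan's theorem to approximate $f_{2jl}$ by polynomials and $f_1$ by $e^{g_1}$ for a polynomial $g_1$ (so that $e^{g_1}$ is nonvanishing, hence lies in $S_1$), uniformly on the respective compacta. The tuple $(e^{g_1},(p_{2jl})_{j,l})$ lies in the interior of the support, so the set
\[
G=\Big\{(\psi_1,(\psi_{2jl})):\ \sup_{K_1}|\psi_1-e^{g_1}|<\varepsilon/2,\ \sup_{j,l}\sup_{K_{2jl}}|\psi_{2jl}-p_{2jl}|<\varepsilon/2\Big\}
\]
is open with $P(G)>0$. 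Since $G$ is open, the portmanteau inequality $\liminf_{N\to\infty}P_N(G)\ge P(G)>0$ holds, and for $k$ in this set the shifts $(\varphi(s+ikh),(\zeta(s+ikh,\alpha_j;\gb_{jl})))$ approximate $(f_1,(f_{2jl}))$ to within $\varepsilon$ on the prescribed compacta, which is the assertion of Theorem~\ref{RK-KM-general-h}. \qedhere
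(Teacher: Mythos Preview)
Your proposal is correct and follows essentially the same scheme as the paper: a discrete limit theorem on the torus $\underline\Omega$ via the Weyl/Fourier argument (the paper's Lemma~\ref{on-torus}), transfer to the function space $\underline H$ through the approximation-by-absolutely-convergent-series and ergodicity machinery (Lemmas~\ref{RK-lem-1}--\ref{RK-lem-4}, yielding Theorem~\ref{RK-KM-gen-lim}), the support identification $S_\varphi\times H^\lambda(D_T)$ using Steuding's lemma for the Euler-product factor and the rank condition for the periodic Hurwitz block (Lemma~\ref{RK-support}), and the Mergelyan/portmanteau conclusion. The only cosmetic difference is that the paper deduces Theorem~\ref{RK-KM-general-h} as the special case $h_1=h_{21}=\cdots=h_{2r}=h$ of Theorem~\ref{RK-KM-general}, whereas you argue directly with a single $h$; the content is the same.
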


In Theorem~\ref{RK-KM-general-h} there appears only one common difference $h$.
Our second new theorem on universality describes the situation when the common differences
associated with relevant zeta-functions can be different from each other.

\begin{theorem}\label{RK-KM-general}
	Suppose that the elements of the set $L(\uA,\uhh)$ are linearly independent over ${\mathbb{Q}}$, and $B_j$, $f_1(s), f_{2jl}(s)$, $K_1$, $K_{2jl}$ and $\varphi(s)$ satisfy hypotheses of Theorem~\ref{RK-KM-general-h}.
	Then, for every $\varepsilon>0$, it holds that
	\begin{align*}
	\liminf\limits_{N \to \infty}\frac{1}{N+1}\bigg\{0\leq k\leq N: &
	\sup\limits_{s \in K_1}|\varphi(s+ikh_1)-f_1(s)|<\varepsilon, \\
	& \sup\limits_{1\leq j\leq r}\sup\limits_{1\leq l\leq l(j)}
	\sup\limits_{s\in K_{2jl}}|\zeta(s+ikh_{2j},\alpha_j;{\mathfrak{B}}_{jl})-f_{2jl}(s)|<
	\varepsilon\bigg\}>0.
	\end{align*}
\end{theorem}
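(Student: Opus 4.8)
The plan is to follow the standard Voronin-type scheme adapted to the discrete mixed joint setting, reducing Theorem~\ref{RK-KM-general} to a limit theorem in the space of analytic functions together with a support computation. First I would introduce the relevant function spaces: set $D_1=D(\sigma^*,1)$ and $D_{2jl}=D(\frac12,1)$, and work in $H(D_1)\times\prod_{j=1}^{r}\prod_{l=1}^{l(j)}H(D_{2jl})$ equipped with the product of the usual topologies of uniform convergence on compacta. Over this space one considers the probability measure
\begin{align*}
P_N(A)=\frac{1}{N+1}\#\Bigl\{0\le k\le N:\bigl(\vf(s+ikh_1),\zeta(s+ikh_{2j},\alpha_j;\gb_{jl})\bigr)\in A\Bigr\},
\end{align*}
for $A\in\mathcal B$ of the product space. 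The first main step is to prove that $P_N$ converges weakly, as $N\to\infty$, to an explicitly described limit measure $P$. This is the discrete analogue of Theorem~4.2 of \cite{RK-KM-2017-BAMS}; the novelty here, relative to the continuous case and to the earlier discrete papers, is that the linear independence hypothesis is imposed on $L(\uA,\uhh)$, with the weights $h_1$ on $\log p$ and $h_{2j}$ on $\log(m+\alpha_j)$ together with $\pi$, rather than on $L(\uA,h)$ with a single $2\pi/h$. I expect this weak convergence to be handled by the same machinery already developed by the authors: pass to the torus $\Omega=\prod_{p}\{|z|=1\}\times\prod_{j=1}^{r}\prod_{m\in\nn_0}\{|z|=1\}$, use the linear independence of $L(\uA,\uhh)$ over $\qq$ to show (via Weyl's criterion) that the discrete sequence $\bigl(p^{-ikh_1},(m+\alpha_j)^{-ikh_{2j}}\bigr)_{k}$ is uniformly distributed in $\Omega$, establish the limit theorem first for the associated random elements built from the truncated Dirichlet series, and then remove the truncation by the mean-value estimate \eqref{meanvalue-for-varphi} for $\vf$ and the analogous bounds for the periodic Hurwitz components, exactly as in the one-difference case of \cite{RK-KM-2017-Pal}.

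The second main step is to identify the support of the limit measure $P$. Here one uses that $\vf\in\widetilde S$ together with the denseness (Steuding-class) lemma to control the $\vf$-coordinate, and the condition ${\rm rank}\,B_j=l(j)$ for $j=1,\dots,r$ to control the periodic Hurwitz coordinates: the matrices $B_j$ having full column rank is precisely what guarantees, via the standard argument on rearrangement of conditionally convergent series together with the linear independence of $\{\log(m+\alpha_j):m\in\nn_0\}$ across different $j$, that the support contains all tuples $(g_1,g_{2jl})$ with $g_1\in H(D_1)$ non-vanishing (or an $h$-perturbation thereof) and $g_{2jl}\in H(D_{2jl})$ arbitrary. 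The crucial point making the mixed joint statement work is that the three blocks of $L(\uA,\uhh)$ — the $h_1\log p$, the $h_{2j}\log(m+\alpha_j)$ for each $j$, and the single element $\pi$ — are jointly $\qq$-linearly independent, which decouples the Euler-product part from all the Hurwitz parts and decouples the distinct $\alpha_j$ from one another; the role of $\pi$ is the same as that of $2\pi/h$ in Theorem~\ref{RK-KM-disc-univ}, namely to make the relevant exponential sums equidistribute.

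The final step is routine: given $K_1,K_{2jl}$ with connected complements and target functions $f_1\in H_0^c(K_1)$, $f_{2jl}\in H^c(K_{2jl})$, apply Mergelyan's theorem to approximate $f_1$ by $e^{p_1(s)}$ with $p_1$ a polynomial (so it is non-vanishing) on $K_1$ and each $f_{2jl}$ by a polynomial on $K_{2jl}$, observe that the set
\begin{align*}
G=\Bigl\{(g_1,g_{2jl}):\sup_{K_1}|g_1-e^{p_1}|<\tfrac{\varepsilon}{2},\ \sup_{1\le j\le r}\sup_{1\le l\le l(j)}\sup_{K_{2jl}}|g_{2jl}-f_{2jl}|<\tfrac{\varepsilon}{2}\Bigr\}
\end{align*}
is open and, by the support computation of Step~2, has $P(G)>0$; then weak convergence (Step~1) plus the portmanteau theorem yields $\liminf_{N\to\infty}P_N(\overline G)\ge P(G)>0$, and unwinding the definition of $P_N$ gives the asserted positive lower density. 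The main obstacle is Step~1 in the precise form required: checking that all the measure-theoretic and analytic estimates from \cite{RK-KM-2017-BAMS} and \cite{RK-KM-2017-Pal} go through with the weight vector $\uhh=(h_1,h_{21},\dots,h_{2r})$ and with $L(\uA,\uhh)$ as the independence set — in particular that the Weyl-sum / uniform-distribution argument on $\Omega$ still applies when each coordinate carries its own common difference, so that the limit measure factors as the convolution/product of the corresponding one-function limit measures. Once that equidistribution is in hand, Steps~2 and~3 are essentially identical to the arguments already in the literature, and Theorem~\ref{RK-KM-general-h} is the special case $h_1=h_{21}=\dots=h_{2r}=h$ (with $\pi$ replaced by $2\pi/h$), proved in the same way.
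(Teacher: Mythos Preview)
Your proposal is correct and follows essentially the same route as the paper: the authors reduce Theorem~\ref{RK-KM-general} to the discrete limit theorem (their Theorem~\ref{RK-KM-gen-lim}, proved via the torus/Weyl equidistribution of Lemma~\ref{on-torus}, the truncated-series approximation of Lemmas~\ref{RK-lem-1}--\ref{RK-lem-3}, and the ergodic identification of Lemma~\ref{RK-lem-4}), the support computation (their Lemma~\ref{RK-support}, using Steuding's lemma for $\vf$ and the ${\rm rank}\,B_j=l(j)$ condition for the Hurwitz block), and then Mergelyan plus portmanteau exactly as you outline. The only cosmetic difference is that the paper works on bounded substrips $D_M\subset D(\sigma^*,1)$ and $D_T\subset D(\tfrac12,1)$ containing the compacta rather than on the full strips, and makes the ergodicity/Birkhoff--Khintchine step explicit in identifying the limit measure with $P_{\underline{Z}}$.
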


\begin{remark}\label{remark}
	As we already have noted, Theorems~\ref{RK-KM-general-h} and \ref{RK-KM-general} are  discrete analogues of Theo\-rem~4.2 from \cite{RK-KM-2017-BAMS}, but here, instead of algebraic independence of the parameters $\alpha_1,...,\alpha_{r}$, the linear independence of the elements of the sets $L(\uA,h)$ and $L(\uA,\uhh)$ over the set of rational numbers $\qq$ are used, respectively.
\end{remark}
	
\section{A discrete limit theorem}\label{aux-lemmas}

Theorem~\ref{RK-KM-general-h} is obviously a special case of
Theorem~\ref{RK-KM-general}, so it is enough to prove Theorem~\ref{RK-KM-general}.
For this aim, 
we use a discrete mixed joint limit theorem (Theorem \ref{RK-KM-gen-lim} below)
in the sense of weakly convergent probability measures in the space of analytic functions. 
Since Theorem \ref{RK-KM-gen-lim} is valid for more general $\varphi\in{\mathcal M}$, 
we will formulate the theorem in such a general setting.
In this section we first prove two auxiliary lemmas which play essential roles in 
the proof of Theorem \ref{RK-KM-gen-lim}.

We introduce certain topological structure.
Let $\gamma$ be the unit circle on the complex plane, and $\Omega_1$ and $\Omega_2$ be two tori defined as 
\begin{align*}
\Omega_1=\prod\limits_{p\in\mathbb{P}}\gamma_p \quad \text{and}\quad \Omega_2=\prod\limits_{m=0}^{\infty}\gamma_m,
\end{align*}
where $\gamma_p=\gamma$ for all $p\in \mathbb{P}$, and $\gamma_m=\gamma$ for all
$m \in \mathbb{N}_0$, respectively. It is well known that both tori $\Omega_1$ and $\Omega_2$ are
compact topological Abelian groups with respect to the product topology and pointwise multiplication. Therefore,
\begin{align*}
{\underline \Omega}:=\Omega_1\times \Omega_{21}\times ... \times \Omega_{2r}
\end{align*}
with $\Omega_{2j}=\Omega_2$ for all $j=1,...,r$ also is a compact topological group. This gives that we can define the probability Haar measure $m_H$ on $({\underline \Omega}, {\mathcal{B}}({\underline \Omega}))$. This fact allows us to have the probability space $({\underline \Omega},{\mathcal B}({\underline \Omega}), {m}_H)$ (see \cite{JST-2007}).

Denote by $\omega_1(p)$ be the projection of $\omega_1 \in \Omega_1$ to $\gamma_p$, $p \in {\mathbb{P}}$, and by $\omega_{2j}(m)$ the projection of $\omega_{2j} \in \Omega_{2j}$ to $\gamma_m$, $m \in {\mathbb{N}}_0$, $j=1,...,r$. 
The definition of $\omega_1(m)$ for general $m\in\nn_0$ is given by
$\omega_1(m)=\omega_1(p_1)^{a_1}\cdots\omega_1(p_r)^{a_r}$ according to the 
decomposition $m=p_1^{a_1}\cdots p_r^{a_r}$ into prime divisors.
Let ${{\underline \omega}}=(\omega_1,\omega_{21},..., \omega_{2r})$ be an element of $\underline \Omega$.

For $A \in {\mathcal{B}}(\underline \Omega)$, on $({\underline \Omega}, {\mathcal{B}}({\underline \Omega}))$, define
\begin{align*}
Q_N(A):=\frac{1}{N+1}\#\bigg\{& 0 \leq k \leq N:  \bigg(\big(p^{-ikh_1}: p \in \pp\big),
\cr &\big((m+\alpha_1)^{-ikh_{21}}: m\in \nn_0\big),...,\big((m+\alpha_r)^{-ikh_{2r}}: m\in \nn_0\big)
\bigg)
\in A\bigg\}.
\end{align*}

\begin{lemma}\label{on-torus}
Suppose that the elements of the set $L({\underline\alpha},\uhh)$ are linearly independent over $\qq$. Then $Q_N$ converges weakly to the Haar measure $m_H$ as $N \to \infty$.
\end{lemma}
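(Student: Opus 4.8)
The plan is to prove Lemma~\ref{on-torus} by the standard Fourier-analytic criterion for weak convergence of probability measures on a compact abelian group: a sequence of measures on $(\uO,\mathcal{B}(\uO))$ converges weakly to the Haar measure $m_H$ if and only if, for every nontrivial character of $\uO$, the corresponding sequence of Fourier transforms tends to $0$. So first I would describe the characters of $\uO=\Omega_1\times\Omega_{21}\times\cdots\times\Omega_{2r}$. A character is indexed by a tuple $(\uhhh,\underline{l}_1,\dots,\underline{l}_r)$ where $\uhhh=(k_p)_{p\in\pp}$ is a finitely supported family of integers and each $\underline{l}_j=(l_{jm})_{m\in\nn_0}$ is a finitely supported family of integers; the character acts on $\uom=(\omega_1,\omega_{21},\dots,\omega_{2r})$ by
\[
\chi(\uom)=\prod_{p\in\pp}\omega_1(p)^{k_p}\cdot\prod_{j=1}^{r}\prod_{m\in\nn_0}\omega_{2j}(m)^{l_{jm}}.
\]

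Next I would compute the Fourier transform of $Q_N$ at such a character. Evaluating $\chi$ at the point occurring in the definition of $Q_N$, one gets
\[
\widehat{Q_N}(\chi)=\frac{1}{N+1}\sum_{k=0}^{N}\exp\Bigl(-ikh_1\sum_{p}k_p\log p-ik\sum_{j=1}^{r}h_{2j}\sum_{m}l_{jm}\log(m+\alpha_j)\Bigr).
\]
Write the quantity multiplying $-ik$ inside the exponential as $A:=h_1\sum_p k_p\log p+\sum_{j=1}^r h_{2j}\sum_m l_{jm}\log(m+\alpha_j)$. The sum over $k$ is a geometric series, so $\widehat{Q_N}(\chi)=1$ if $e^{-iA}=1$, i.e. $A\in 2\pi\zz$, and otherwise $|\widehat{Q_N}(\chi)|\le \frac{2}{(N+1)|1-e^{-iA}|}\to 0$. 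The key point is therefore to show that for a nontrivial character (i.e. not all of the $k_p$ and $l_{jm}$ vanishing) one cannot have $A\in 2\pi\zz$. If $A=2\pi n$ for some integer $n$, then since $\pi\in L(\uA,\uhh)$ we would obtain a nontrivial rational-linear relation
\[
h_1\sum_p k_p\log p+\sum_{j=1}^r h_{2j}\sum_m l_{jm}\log(m+\alpha_j)-2n\cdot\pi=0
\]
among the elements $h_1\log p$, $h_{2j}\log(m+\alpha_j)$, and $\pi$ of the set $L(\uA,\uhh)$. (One must check the relation is genuinely nontrivial: if some $k_p\ne0$ or $l_{jm}\ne0$ this is clear; the remaining degenerate case is all $k_p=l_{jm}=0$ but $n\ne0$, which is impossible since then $A=0\ne2\pi n$.) By hypothesis the elements of $L(\uA,\uhh)$ are linearly independent over $\qq$, so all coefficients must vanish, contradicting nontriviality. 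Hence $\widehat{Q_N}(\chi)\to 0$ for every nontrivial $\chi$, while trivially $\widehat{Q_N}(\chi_0)=1$ for the trivial character, and the Fourier criterion gives $Q_N\Rightarrow m_H$.

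The main obstacle — though it is a mild one in this setup — is the bookkeeping needed to guarantee that a hypothetical equality $A\in2\pi\zz$ really produces a \emph{nontrivial} $\qq$-linear relation among elements of $L(\uA,\uhh)$, rather than the trivial $0=0$; this is why it is essential that $\pi$ (and not, say, $2\pi/h$ as in Theorem~\ref{RK-KM-disc-univ}) is included as an element of $L(\uA,\uhh)$, so that the ``$2\pi n$'' term is absorbed as a genuine member of the set and the independence hypothesis can be applied directly. One should also note that the families $(k_p)$ and $(l_{jm})$ are finitely supported, so all sums above are finite and there is no convergence issue, and that distinctness of the logarithms $\log(m+\alpha_j)$ across different $(m,j)$ — which follows from $\alpha_j\in(0,1)$ together with the linear independence hypothesis itself — ensures the coefficients in the relation are well defined. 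Everything else is routine.
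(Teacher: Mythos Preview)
Your argument is correct: the Fourier-transform criterion on the compact abelian group $\uO$, the explicit description of characters, the reduction to a geometric sum $\frac{1}{N+1}\sum_{k=0}^N e^{-ikA}$, and the use of the linear independence of $L(\uA,\uhh)$ (with $\pi$ playing the role of absorbing the $2\pi n$ term) are exactly the standard ingredients, and you have handled the nontriviality bookkeeping cleanly.

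The only difference from the paper is that the paper does not prove this lemma at all: it simply cites it as Lemma~6 of \cite{RK-KM-Pal-2018}. Your write-up supplies precisely the argument that sits behind that citation (and is, incidentally, the same Fourier method the paper sketches for the ergodicity lemma that follows). So there is no discrepancy in approach, only in level of detail.
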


\begin{proof}
This is Lemma~6 from \cite{RK-KM-Pal-2018}.
\end{proof}

Another important auxiliary result is related with the ergodic theory. It will be used for the identification of the explicit form of the limit measure.

Let $a_{\uA,\uhh}:=\{(p^{-ih_1}:p \in \pp),((m+\alpha_1)^{-ih_{21}}: m \in \nn_0), ..., ((m+\alpha_r)^{-ih_{2r}}: m \in \nn_0)\}$ be an element of $\uO$. Since the Haar measure $m_H$ is an invariant with respect to translations on $\uO$, 
we define the measurable measure-preserving transformation of the torus $\uO$ by the formula
\begin{align*}
\Phi_{\uA,\uhh}(\uom):=a_{\uA,\uhh}\uom \qquad (\uom\in \uO).
\end{align*}
Recall that the set $A \in {\mathcal B}(\uO)$ is invariant with respect to the transformation $\Phi_{\uA,\uhh}$  if the sets $A$ and $\Phi_{\uA,\uhh}(A)$ differ from each other at most by the set of zero $m_H$-measure, and the transformation $\Phi_{\uA,\uhh}$ is ergodic if its Borel $\sigma$-field of invariant sets consists of sets having $m_H$-measure 0 or 1.

\begin{lemma}\label{ergodic}
	Suppose that the elements of the set $L(\uA,\uhh)$ are linearly independent over 
$\qq$. Then the transformation $\Phi_{\uA,\uhh}$ is ergodic.
\end{lemma}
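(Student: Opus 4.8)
The plan is to show ergodicity of $\Phi_{\uA,\uhh}$ by the standard Fourier-analytic method on compact abelian groups: a measure-preserving transformation of $\uO$ given by translation by a fixed element $a_{\uA,\uhh}$ is ergodic if and only if $a_{\uA,\uhh}$ generates a dense subgroup of $\uO$, and this in turn can be checked by testing characters. First I would recall that every character $\chi$ of the compact group $\uO=\Omega_1\times\Omega_{21}\times\cdots\times\Omega_{2r}$ has the form
\begin{align*}
\chi(\uom)=\prod_{p\in\pp}\omega_1(p)^{k_p}\;\prod_{j=1}^{r}\prod_{m\in\nn_0}\omega_{2j}(m)^{l_{mj}},
\end{align*}
where all exponents $k_p,l_{mj}\in\zz$ and only finitely many of them are nonzero. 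Suppose $g\in L^2(\uO,m_H)$ is $\Phi_{\uA,\uhh}$-invariant; expanding $g$ in its Fourier series $g(\uom)=\sum_\chi \hat g(\chi)\chi(\uom)$ and using invariance $g(\Phi_{\uA,\uhh}(\uom))=g(\uom)$ together with $\chi(\Phi_{\uA,\uhh}(\uom))=\chi(a_{\uA,\uhh})\chi(\uom)$, one finds that for every character $\chi$ either $\hat g(\chi)=0$ or $\chi(a_{\uA,\uhh})=1$. So the lemma reduces to showing that $\chi(a_{\uA,\uhh})=1$ forces $\chi$ to be the trivial character.

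Next I would compute $\chi(a_{\uA,\uhh})$ explicitly using the definition of $a_{\uA,\uhh}$: since the $p$-component is $p^{-ih_1}$ and the $m$-th component of the $j$-th torus is $(m+\alpha_j)^{-ih_{2j}}$, we get
\begin{align*}
\chi(a_{\uA,\uhh})=\exp\Bigl(-i\Bigl(h_1\sum_{p\in\pp}k_p\log p+\sum_{j=1}^{r}h_{2j}\sum_{m\in\nn_0}l_{mj}\log(m+\alpha_j)\Bigr)\Bigr).
\end{align*}
Hence $\chi(a_{\uA,\uhh})=1$ is equivalent to
\begin{align*}
h_1\sum_{p}k_p\log p+\sum_{j=1}^{r}h_{2j}\sum_{m}l_{mj}\log(m+\alpha_j)\in 2\pi\zz,
\end{align*}
i.e. this quantity equals $2\pi N$ for some $N\in\zz$. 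Now I invoke the hypothesis: the elements of
$$L(\uA,\uhh)=\{h_1\log p:p\in\pp\}\cup\bigcup_{j=1}^{r}\{h_{2j}\log(m+\alpha_j):m\in\nn_0\}\cup\{\pi\}$$
are linearly independent over $\qq$. The displayed relation is precisely a $\zz$-linear (hence $\qq$-linear) relation among these elements — the $2\pi N$ term absorbs into $-2N\cdot\pi$, so $\pi$ appears with integer coefficient $-2N$, each $h_1\log p$ with coefficient $k_p$, and each $h_{2j}\log(m+\alpha_j)$ with coefficient $l_{mj}$. Linear independence over $\qq$ forces all these coefficients to vanish: $k_p=0$ for all $p$, $l_{mj}=0$ for all $m,j$, and $N=0$. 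Therefore $\chi$ is trivial, so $g$ is constant $m_H$-a.e., which is exactly the statement that $\Phi_{\uA,\uhh}$ is ergodic.

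I do not expect any serious obstacle here: the argument is the classical reduction of ergodicity of a group translation to denseness of the generated subgroup, combined with a direct application of the linear-independence hypothesis — essentially the same mechanism that underlies Lemma~\ref{on-torus}. The one point deserving care is the bookkeeping of the character: one must ensure that the decomposition $\omega_1(m)=\omega_1(p_1)^{a_1}\cdots\omega_1(p_r)^{a_r}$ is only used for the $\Omega_1$-factor (where all characters are determined by their values on the prime coordinates), while the $\Omega_{2j}$-factors are genuine full products over $m\in\nn_0$ with independent exponents $l_{mj}$; conflating these would give a wrong character group. Once the character group is correctly described, the rest is immediate.
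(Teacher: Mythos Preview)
Your proof is correct and follows essentially the same approach as the paper: both describe the character group of $\uO$, compute $\chi(a_{\uA,\uhh})$, and use the linear independence of $L(\uA,\uhh)$ to conclude that $\chi(a_{\uA,\uhh})\neq 1$ for every nontrivial $\chi$, whence ergodicity follows by the standard Fourier argument. The paper merely sketches the last step by citing \cite{EB-AL-2015-LMJ}, whereas you spell out the $L^2$ Fourier-expansion argument and the role of the coefficient $-2N$ of $\pi$ explicitly, but the underlying method is identical.
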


\begin{proof}
This lemma is proved by the Fourier transform method.
Any character $\chi$ of the torus $\underline \Omega$ can be written in the form
$$
\chi(\uom)=\prod_{p\in\pp}\omega_1^{k_p}(p)\prod_{m\in \nn_0}\prod_{j=1}^r 
\omega_{2j}^{l_{mj}}(m),
$$
where $k_p,l_{m1},\ldots,l_{mr}$ are integers, only a finite number of which are distinct
from 0.    Then
$$
\chi(a_{\uA,\uhh})=\exp\left(-ih_1\sum_{p\in\pp}k_p \log p-i\sum_{m\in\nn_0}\sum_{j=1}^r
h_{2j}l_{mj}\log(m+\alpha_j)\right)\neq 1
$$
for any non-trivial $\chi$, because of the linear independence of the elements of
$L(\uA,\uhh)$.    Using this fact, we proceed along the standard way
to prove the lemma; see, for example,
Section 4 of \cite{EB-AL-2015-LMJ}.
\end{proof}

\medskip

Now we are ready for the discrete mixed joint limit theorem for the tuple of the class of zeta-functions under our investigation.

Let $D_1$ be an open subset of $D(\sigma^*,1)$, and $D_2$ be an open subset of
$D\big(\frac{1}{2},1\big)$.
Denote the Cartesian product of $\lambda+1$ such spaces as
$$
{\underline H}=H(D_1)\times \underbrace{H(D_2)\times ...\times H(D_2)}\limits_{\lambda}.
$$

Let $\varphi\in{\mathcal M}$, and define
\begin{align*}
{{\underline Z}}(\underline{s},{{\underline \alpha}};{{\underline{\mathfrak{B}}}})=\bigg(\varphi(s_1),&\zeta(s_{211},\alpha_1;{\mathfrak{B}}_{11}),...,\zeta(s_{21l(1)},\alpha_1;{\mathfrak{B}}_{1l(1)}),...,\cr
& \zeta(s_{2r1},\alpha_r;{\mathfrak{B}}_{r1}),...,\zeta(s_{2rl(r)},\alpha_r;{\mathfrak{B}}_{r l(r)})
\bigg)
\end{align*}
with $\underline{\mathfrak{B}}=\big(\mathfrak{B}_{11},\ldots, \mathfrak{B}_{1l(1)},
\cdots,$ $\mathfrak{B}_{r1},\cdots,\mathfrak{B}_{rl(r)}\big)$ and
$\underline{s}=(s_1,s_{211},\ldots,s_{21l(1)},\ldots,s_{2r1},\ldots,$ $s_{2rl(r)})
\in\mathbb{C}^{\lambda+1}$.   Further we write
$$
\underline{s}+ik\underline{h}=(s_1+ikh_1,s_{211}+ikh_{21},\ldots,s_{21l(1)}+ikh_{21},\ldots,s_{2r1}+ikh_{2r},\ldots, s_{2rl(r)}+ikh_{2r})
$$
for $k\in\nn_0$.
The main tool for the proof of our discrete mixed universality theorems is the limit theorem on $(\underline{H},{\mathcal B}(\underline{H}))$ for
$$
P_N(A):=\frac{1}{N+1}\#\big\{0 \leq k \leq N: \underline{Z}(\underline{s}
+ik\underline{h},\underline{\alpha};\underline{\mathfrak{B}})\in A\big\}, \quad A \in {\mathcal B}(\underline{H}).
$$

On  $(\uO,{\mathcal B}(\uO),m_H)$, define  the ${{\underline H}}(D)$-valued random element ${{\underline Z}}(\underline{s},{{\underline \omega}}, {{\underline \alpha}};{{\underline{\mathfrak{B}}}})$ by the formula
\begin{align*}
{{\underline Z}}(\underline{s},{{\underline \omega}}, {{\underline \alpha}};{{\underline{\mathfrak{B}}}})=\bigg(\varphi(s_1,\omega_1), & \zeta(s_{211},\alpha_1,\omega_{21};{\mathfrak{B}}_{11}),...,\zeta(s_{21l(1)},\alpha_1,\omega_{21};{\mathfrak{B}}_{1l(1)}),...,\cr
& \zeta(s_{2r1},\alpha_r,\omega_{2r};{\mathfrak{B}}_{r1}),...,\zeta(s_{2rl(r)},\alpha_r,\omega_{2r};{\mathfrak{B}}_{r l(r)})
\bigg)
\end{align*}
with
\begin{align*}
\varphi(s_1,\omega_1)=\sum_{m=1}^{\infty}\frac{c_m\omega_1(m)}{{m^{s_1}}}, \quad s_1 \in D(\sigma^*,1),
\end{align*}
and
\begin{align*}
\zeta(s_{2jl},\alpha_j,\omega_{2j};\mathfrak{B}_{jl})=\sum_{m=0}^{\infty}\frac{b_{mjl}\omega_{2j}(m)}{(m+\alpha_j)^{s_{2jl}}}, \quad  s_{2jl} \in D\left(\frac{1}{2},1\right), \quad j=1,...,r, \quad l=1,...,l(j).
\end{align*}
These series are convergent for almost all $\omega_1 \in \Omega_1$ and $\omega_{2j}\in \Omega_{2j}$, $j=1,...,r$. Let $P_{{{\underline Z}}}$ be the distribution of the random element ${{\underline Z}}({\underline s},{{\underline \omega}}, {{\underline \alpha}};{{\underline{\mathfrak{B}}}})$, i.e., the probability measure on $({{\underline H}},{\mathcal B}({{\underline H}}))$ defined as
\begin{align*}
P_{{{\underline Z}}}(A)=m_H \big(
{{\underline \omega}} \in \uO: {{\underline Z}}({\underline s},{{\underline \omega}},{{\underline \alpha}};{{\underline{\mathfrak{B}}}}) \in A
\big), \quad A \in {\mathcal{B}}({\underline H}).
\end{align*}

\begin{theorem}\label{RK-KM-gen-lim}
	Suppose that $\varphi\in\mathcal{M}$, and the elements of the set $L(\uA,\uhh)$ are linearly independent over $\qq$. 
Then $P_N$ converges weakly to
	$P_{\uZ}$
as $N \to \infty$.
\end{theorem}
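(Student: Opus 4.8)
The plan is to follow the now-standard three-step route for discrete limit theorems for Dirichlet series in the space of analytic functions, adapting it to the mixed joint setting with the collection of periodic Hurwitz zeta-functions attached to the tuple $\uA$. First I would lift the problem to the torus: consider the map $u_N$ sending $k\mapsto\big((p^{-ikh_1})_p,((m+\alpha_1)^{-ikh_{21}})_m,\ldots,((m+\alpha_r)^{-ikh_{2r}})_m\big)\in\uO$, so that $Q_N$ from the excerpt is exactly the pushforward of the discrete uniform measure on $\{0,\ldots,N\}$. By Lemma~\ref{on-torus}, $Q_N\Rightarrow m_H$ as $N\to\infty$. Then one introduces, for a parameter $n$, the finite truncations $\vf_n(s_1,\omega_1)$ and $\zeta_n(s_{2jl},\alpha_j,\omega_{2j};\gb_{jl})$ obtained by inserting a smooth cutoff factor $v_n(m)=\exp(-(m/n)^{\theta})$ (any fixed $\theta>0$) into the Dirichlet series, and their $\omega$-free counterparts $\vf_n(s_1),\zeta_n(s_{2jl},\alpha_j;\gb_{jl})$; the maps $\omega\mapsto\uZ_n(\us,\cdot)$ and $\omega\mapsto\uZ_n(\us,\omega,\cdot)$ are continuous from $\uO$ to $\uH$, so the weak convergence $Q_N\Rightarrow m_H$ transports, via the continuous-mapping theorem, to weak convergence of the distributions of $\uZ_n(\us+ik\uhh,\uA;\ugb)$ (under $P_N$) to the distribution of $\uZ_n(\us,\uom,\uA;\ugb)$ (under $m_H$), for each fixed $n$.

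Second, I would remove the truncation by an approximation-in-mean argument. On the $\vf$-component one uses the mean-value bound \eqref{meanvalue-for-varphi} together with the meromorphic continuation and polynomial growth (conditions (i)--(iii) defining $\mathcal M$) to show that, in the metric $\rho$ of $H(D_1)$,
\begin{align*}
\lim_{n\to\infty}\limsup_{N\to\infty}\frac{1}{N+1}\sum_{k=0}^{N}\rho\big(\vf(s_1+ikh_1),\vf_n(s_1+ikh_1)\big)=0,
\end{align*}
and similarly for each periodic Hurwitz component in $H(D_2)$, where the relevant second-moment estimate for $\zeta(s,\alpha_j;\gb_{jl})$ in the strip $\sigma>\tfrac12$ is classical. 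The same approximation holds on the probability space: for $m_H$-a.e.\ $\uom$, $\uZ_n(\us,\uom,\uA;\ugb)\to\uZ(\us,\uom,\uA;\ugb)$ in $\uH$, and one controls the analogous mean deviation using the a.s.\ convergence of the random Dirichlet series stated just before Theorem~\ref{RK-KM-gen-lim}. Combining these with the triangle inequality and a standard lemma (convergence of the truncated measures together with vanishing mean approximation implies convergence of the original measures — e.g.\ the argument via Prokhorov's theorem and a metric on the space of probability measures) yields that $P_N$ converges weakly to the distribution of $\uZ(\us,\uom,\uA;\ugb)$, which is precisely $P_{\uZ}$.

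Third, the limit measure must be shown to be $P_{\uZ}$ as \emph{defined} — i.e.\ no further identification is needed since the approximation argument already produced exactly the distribution of $\uZ(\us,\uom,\uA;\ugb)$; however, in practice one frequently proves the limit theorem first for a version with a random-$\omega$-independent shift and then identifies the limit, so I would instead argue as follows if that route is taken: let $\hat P$ denote any weak limit of a subsequence of $P_N$ (tightness of $\{P_N\}$ follows from the uniform mean bounds above together with Chebyshev, giving compact containment in $\uH$). One checks that $\hat P$ is invariant under the shift induced by $\Phi_{\uA,\uhh}$ on the torus side; by Lemma~\ref{ergodic} this transformation is ergodic, so every such invariant event has $m_H$-measure $0$ or $1$, which forces $\hat P=P_{\uZ}$. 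Since every subsequential limit equals $P_{\uZ}$ and the sequence is tight, $P_N\Rightarrow P_{\uZ}$.

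The main obstacle is the second step — the uniform-in-$N$ mean-square approximation of $\vf$ by its truncations $\vf_n$ in the strip $D(\sigma^*,1)$, using only the weak hypotheses (i)--(iii) of the class $\mathcal M$ rather than an Euler product or functional equation. One must pass from the single-point estimate \eqref{meanvalue-for-varphi} on the line $\sigma=\sigma_0$ to a bound valid on compact subsets of $D(\sigma^*,1)$ via a contour-integral (Mellin–Barnes) representation of $\vf-\vf_n$, shift the contour past $\sigma_0$ picking up the (compactly supported) poles, and estimate the resulting integral along $\sigma=\sigma_0$ using Cauchy–Schwarz, the mean-value bound, and the polynomial growth in $t$ to absorb the gamma-factor decay; the discrete average over $k$ then reduces, after another application of the Gallagher-type or Montgomery–Vaughan inequality adapted to arithmetic progressions $\{kh_1\}$, to the continuous mean-value estimate. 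The periodic Hurwitz components are technically easier since their second moments in $\sigma>\tfrac12$ are well understood, but the bookkeeping over the double index $(j,l)$ and the two distinct step sizes $h_1$ and $h_{2j}$ must be carried carefully.
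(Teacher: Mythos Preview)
Your outline is essentially the paper's proof: introduce the $v_n$-smoothed truncations, push Lemma~\ref{on-torus} through the continuous map $\uom\mapsto\uZ_n(\us,\uom,\uA;\ugb)$ to get $P_{N,n}\Rightarrow P_n$, establish the discrete mean approximation (your final paragraph on the Mellin/contour argument for the $\varphi$-component and the Gallagher-type reduction matches what the paper invokes), and then identify the limit. The only substantive divergence is in the identification step.

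Your primary route---``$\uZ_n(\us,\uom)\to\uZ(\us,\uom)$ in $\uH$ for $m_H$-a.e.\ $\uom$, hence $P_n\Rightarrow P_{\uZ}$, so Billingsley's Theorem~4.2 already gives $P_N\Rightarrow P_{\uZ}$''---is a legitimate and slightly more direct alternative, but the a.s.\ convergence you invoke is \emph{not} the statement appearing just before Theorem~\ref{RK-KM-gen-lim}: that passage only asserts a.s.\ convergence of the untruncated random series, not that the smoothed approximants $\uZ_n$ converge to $\uZ$. The latter needs its own justification (again via a Mellin representation of the difference, which is doable but not free). The paper sidesteps this by running the truncation-plus-approximation argument a second time, for $P_{N,n,\widehat\uom}$ and $P_{N,\uom}$, showing that both $P_N$ and $P_{N,\uom}$ converge to the \emph{same} unidentified limit $P$; then Birkhoff--Khintchine (using Lemma~\ref{ergodic}) applied to $\uom\mapsto\mathbf{1}_A\big(\uZ(\us,\uom,\uA;\ugb)\big)$ on $\uO$ yields $P_{N,\uom}(A)\to P_{\uZ}(A)$ a.s.\ for continuity sets $A$, forcing $P=P_{\uZ}$. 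Your backup ergodic description misstates this mechanism: $\hat P$ lives on $\uH$, not on $\uO$, so ``$\hat P$ is invariant under $\Phi_{\uA,\uhh}$'' has no direct meaning, and ergodicity of $\Phi_{\uA,\uhh}$ on $\uO$ does not by itself pin down a measure on $\uH$; the bridge is precisely the Birkhoff--Khintchine time average for the auxiliary sequence $P_{N,\uom}$.
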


We prove this theorem in a way whose principle has been already well-developed (see, for example, Theorem~4 of \cite{RK-KM-Pal-2018} or Lemma~5.1 of \cite{RK-KM-2017-BAMS} as the continuous analogue). 
However our present situation includes a lot of parameters which may cause a trouble
for the readers, so we will explain many details even if they are now standard.

\section{Proof of the limit theorem}

We begin with a discrete mixed joint limit theorem for absolutely convergent Dirichlet series.
For a fixed number $\sigma_0^*>\frac{1}{2}$, let
$$
v_1(m,n):=\exp\left(-\left(\frac{m}{n}\right)^{\sigma_0^*}\right),\quad m\in\nn, \quad n \in \nn,
$$
and
$$
v_2(m,n,\alpha_j)=\exp\left(-\left(\frac{m+\alpha_j}{n+\alpha_j}\right)^{\sigma_0^*}\right),
\quad m\in\nn_0, \quad n \in \nn, \quad j=1,...,r.
$$

In view of the Mellin transform formula and contour integration, we can show 
(cf. pp.153-154 of \cite{AL-1996-Bordo}, p.87 of \cite{AL-RG-2002}) that the series
$$
\varphi_n(s_1)=\sum_{m=1}^{\infty}\frac{c_mv_1(m,n)}{m^{s_1}},\quad s_1 \in D(\sigma^*,1),
$$
and
$$
\zeta_n(s_{2jl},\alpha_j;\gb_{jl})=\sum_{m=0}^{\infty}\frac{b_{mjl}v_2(m,n,\alpha_j)}{(m+\alpha_j)^{s_{2jl}}}, \quad s_{2jl}\in D\left(\frac{1}{2},1\right),\quad j=1,...,r, \quad l=1,...,l(r),
$$
converge absolutely when the real parts of all complex variables are 
greater than $\frac{1}{2}$.

Moreover, for $\uom\in \uO$, let
$$
\varphi_n(s_1,\omega_1)=\sum_{m=1}^{\infty}\frac{c_m\omega_1(m) v_1(m,n)}{m^{s_1}},\quad s_1 \in D(\sigma^*,1),
$$
and
$$
\zeta_n(s_{2jl},\alpha_j,\omega_{2j};\gb_{jl})=\sum_{m=0}^{\infty}\frac{b_{mjl} \omega_{2j}(m)v_2(m,n,\alpha_j)}{(m+\alpha_j)^{s_{2jl}}}, \quad s_{2jl}\in D\left(\frac{1}{2},1\right),
$$
$j=1,...,r$, $l=1,...,l(r)$.
These series are convergent for almost all $\omega_1 \in \Omega_1$ and $\omega_{2j} \in \Omega_{2j}$, $j=1,...,r$, when the real parts of all complex variables are greater 
than $\frac{1}{2}$.

Now let us fix ${\widehat\uom}=\big({\widehat\omega}_1,{\widehat\omega}_{21},..., {\widehat\omega}_{2r}\big)\in \uO$. For $A \in {\mathcal B}(\uH)$, define two measures
\begin{align*}
P_{N,n}(A):=\frac{1}{N+1}\#\bigg\{0 \leq k \leq N: {{\underline Z}}_n(\underline{s}
+ik\underline{h},{{\underline \alpha}};{{\underline{\mathfrak{B}}}}) \in A
\bigg\}
\end{align*}
and
\begin{align*}
P_{N,n,{\widehat \uom}}(A):=\frac{1}{N+1}\#\bigg\{0 \leq k \leq N: {{\underline Z}}_n(\underline{s}+ik\underline{h},{{\underline {\widehat\omega}}}, {{\underline \alpha}};{{\underline{\mathfrak{B}}}}) \in A\bigg\},
\end{align*}
where
\begin{align*}
{{\underline Z}}_n(\underline{s},{{\underline \alpha}};{{\underline{\mathfrak{B}}}})=\bigg(\varphi_n(s_1),&\zeta_n(s_{211},\alpha_1;{\mathfrak{B}}_{11}),...,\zeta_n(s_{21l(1)},\alpha_1;{\mathfrak{B}}_{1l(1)}),...,\cr
& \zeta_n(s_{2r1},\alpha_r;{\mathfrak{B}}_{r1}),...,\zeta_n(s_{2rl(r)},\alpha_r;{\mathfrak{B}}_{r l(r)})
\bigg)
\end{align*}
and
\begin{align*}
{{\underline Z}}_n(\underline{s},{{\underline {\widehat \omega}}}, {{\underline \alpha}};{{\underline{\mathfrak{B}}}})=\bigg(\varphi_n(s_1, {\widehat\omega}_1), & \zeta_n(s_{211},\alpha_1, {\widehat\omega}_{21};{\mathfrak{B}}_{11}),...,\zeta_n(s_{21l(1)},\alpha_1,{\widehat\omega}_{21};{\mathfrak{B}}_{1l(1)}),...,\cr
& \zeta_n(s_{2r1},\alpha_r, {\widehat\omega}_{2r};{\mathfrak{B}}_{r1}),...,\zeta_n(s_{2rl(r)},\alpha_r, {\widehat\omega}_{2r};{\mathfrak{B}}_{r l(r)})
\bigg).
\end{align*}


\begin{lemma}\label{RK-lem-1}
	Suppose that the elements of the set $L(\uA,\uhh)$ are linearly independent over $\qq$.  Then for all $n$, both the measures $P_{N,n}$ and $P_{N,n,{\widehat \omega}}$ converge weakly to the same probabi\-li\-ty measure (denote it by $P_n$) on $(\underline{H},$ ${\mathcal B}(\underline{H}))$ as $N \to \infty$.
\end{lemma}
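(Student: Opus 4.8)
\emph{Plan of proof.} The plan is to exhibit both $P_{N,n}$ and $P_{N,n,\widehat{\uom}}$ as images of the torus measure $Q_N$ under continuous maps, and then to apply Lemma~\ref{on-torus} together with the standard continuous mapping theorem for weak convergence. Define $u_n\colon\uO\to\uH$ by
$$
u_n(\uom)=\uZ_n(\us,\uom,\uA;\ugb)
=\bigl(\varphi_n(s_1,\omega_1),\,\zeta_n(s_{211},\alpha_1,\omega_{21};\gb_{11}),\ldots,\zeta_n(s_{2rl(r)},\alpha_r,\omega_{2r};\gb_{rl(r)})\bigr).
$$
Since $v_1(m,n)$ and $v_2(m,n,\alpha_j)$ decay faster than any negative power of $m$, while $c_m=O(m^{\ve})$ and the periodic coefficients $b_{mjl}$ are bounded, each series defining a component of $u_n(\uom)$ converges absolutely and uniformly on compact subsets of the corresponding strip, uniformly in $\uom\in\uO$ (here one uses $|\omega_1(m)|=|\omega_{2j}(m)|=1$). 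As every summand depends continuously on $\uom$ in the product topology, $u_n$ is a continuous map into $\uH$ with its topology of uniform convergence on compacta, so $P_n:=m_H\circ u_n^{-1}$ is a well-defined probability measure on $(\uH,\mathcal B(\uH))$.

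Next I would record the termwise identities
$$
\varphi_n(s_1+ikh_1)=\varphi_n(s_1,\omega_1^{(k)}),\qquad
\zeta_n(s_{2jl}+ikh_{2j},\alpha_j;\gb_{jl})=\zeta_n(s_{2jl},\alpha_j,\omega_{2j}^{(k)};\gb_{jl}),
$$
where $\omega_1^{(k)}\in\Omega_1$, $\omega_{2j}^{(k)}\in\Omega_{2j}$ are the points with $\omega_1^{(k)}(p)=p^{-ikh_1}$ (extended multiplicatively) and $\omega_{2j}^{(k)}(m)=(m+\alpha_j)^{-ikh_{2j}}$. Putting $\uom^{(k)}=(\omega_1^{(k)},\omega_{21}^{(k)},\ldots,\omega_{2r}^{(k)})$, these give $\uZ_n(\us+ik\uhh,\uA;\ugb)=u_n(\uom^{(k)})$; since $Q_N$ is precisely the empirical measure of the points $\uom^{(k)}$, $0\le k\le N$, it follows that $P_{N,n}=Q_N\circ u_n^{-1}$. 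By Lemma~\ref{on-torus}, $Q_N$ converges weakly to the Haar measure $m_H$, so by the continuous mapping theorem $P_{N,n}$ converges weakly to $P_n$. For the shifted measure, inserting $\widehat\omega_1$ resp.\ $\widehat\omega_{2j}$ in the same computation yields $\uZ_n(\us+ik\uhh,\widehat{\uom},\uA;\ugb)=u_n(\widehat{\uom}\,\uom^{(k)})=u_n\bigl(\tau_{\widehat{\uom}}(\uom^{(k)})\bigr)$, where $\tau_{\widehat{\uom}}$ is translation by $\widehat{\uom}$ on $\uO$, hence $P_{N,n,\widehat{\uom}}=Q_N\circ(u_n\circ\tau_{\widehat{\uom}})^{-1}$. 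The map $u_n\circ\tau_{\widehat{\uom}}$ is continuous, so a further application of the continuous mapping theorem, combined with the translation invariance of $m_H$, gives that $P_{N,n,\widehat{\uom}}$ converges weakly to $m_H\circ\tau_{\widehat{\uom}}^{-1}\circ u_n^{-1}=m_H\circ u_n^{-1}=P_n$, the same limit. This proves the lemma.

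The only step calling for genuine care is the continuity of $u_n$ into $\uH$, i.e.\ the justification that the limit in $\uom$ may be interchanged with the infinite sums defining $\varphi_n$ and the $\zeta_n$; this rests on the super-polynomial decay of the cut-off weights $v_1,v_2$ against the polynomial growth bound on $c_m$ and the boundedness of the $b_{mjl}$, in the same manner as on pp.~153--154 of \cite{AL-1996-Bordo}. The remaining content is the bookkeeping of the $\lambda+1$ coordinates and of the distinct common differences $h_1,h_{21},\ldots,h_{2r}$, which is routine.
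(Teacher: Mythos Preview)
Your proof is correct and follows essentially the same route as the paper: define the continuous map $u_n:\uO\to\uH$ (the paper calls it $h_n$), identify $P_{N,n}$ and $P_{N,n,\widehat{\uom}}$ as pushforwards of $Q_N$ under $u_n$ and $u_n\circ\tau_{\widehat{\uom}}$ respectively, apply Lemma~\ref{on-torus} together with the continuous mapping theorem, and conclude equality of the limits from the translation invariance of the Haar measure. Your write-up supplies slightly more detail on the continuity of $u_n$, but the underlying argument is the same.
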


\begin{proof}
This is a generalization of Lemma 1 of \cite{RK-KM-Pal-2018}.   
The following proof is similar to the argument included in several previous articles, such as
\cite{JG-RM-SR-DS-2010}, \cite{AL-2014}.

Define $h_n:\underline{\Omega}\to\underline{H}$ by 
$h_n(\uom)={{\underline Z}}_n(\underline{s},{{\underline {\omega}}}, {{\underline \alpha}};{{\underline{\mathfrak{B}}}})$.
This is continuous, and 
\begin{align*}
&h_n\left((p^{-ikh_1}:p \in \pp),((m+\alpha_1)^{-ikh_{21}}: m \in \nn_0), ..., ((m+\alpha_r)^{-ikh_{2r}}: m \in \nn_0)\right)\\
&\qquad\qquad ={{\underline Z}}_n(\underline{s}+ik\underline{h}, {{\underline \alpha}};{{\underline{\mathfrak{B}}}}).
\end{align*}
Therefore $P_{N,n}=Q_N\circ h_n^{-1}$.    This fact, Lemma \ref{on-torus} and Theorem 5.1 of \cite{PB-1968} gives that $P_{N,n}$ converges weakly to $m_H\circ h_n^{-1}$ as $N\to\infty$. 

Next define $g_n:\underline{\Omega}\to\underline{H}$ by 
$g_n(\uom)=h_n(\uom\cdot \widehat{\uom})$.    Then
\begin{align*}
&g_n\left((p^{-ikh_1}:p \in \pp),((m+\alpha_1)^{-ikh_{21}}: m \in \nn_0), ..., ((m+\alpha_r)^{-ikh_{2r}}: m \in \nn_0)\right)\\
&\qquad\qquad ={{\underline Z}}_n(\underline{s}+ik\underline{h}, \widehat{\uom},
{{\underline \alpha}};{{\underline{\mathfrak{B}}}}),
\end{align*}
so $P_{N,n,\widehat{\uom}}=Q_N\circ g_n^{-1}$ which converges weakly to $m_H\circ g_n^{-1}$
as above.   
Therefore the lemma follows, because the invariance property of the Haar measure implies that
$m_H\circ g_n^{-1}=m_H\circ h_n^{-1}$.    
\end{proof}

\medskip

In the next step of the proof, we pass from ${\underline Z}_n({\underline s}, \uA;\ugb)$ and ${\underline Z}_n({\underline s}, { \uom}, \uA;\ugb)$ to   $\underline{Z}({\underline s}, \uA;\ugb)$ and $\underline{Z}({\underline s}, \uom, \uA;\ugb)$ by the approximation in mean, respectively.
First define a metric ${\underline \varrho}$ on $\underline{H}$. 
For any open region $G\subset \cc$, let $\rho_G$ be the standard metric on $H(G)$ which
induces the topology of uniform convergence on compact subsets; see Section 1.7 of
\cite{AL-1996}.    Then, for two elements
\begin{align*}
&\underline{f}=(f_1,f_{211},\ldots,f_{21l(1)},\ldots,f_{2r1},\ldots,
f_{2rl(r)}),\\
&\underline{g}=(g_1,g_{211},\ldots,g_{21l(1)},\ldots,g_{2r1},\ldots,
g_{2rl(r)}), 
\end{align*}
of $\underline{H}$, define the metric ${\underline \varrho}$ by
$$
{\underline \varrho}(\underline{f},\underline{g})=\max\left\{\rho_{D_1}(f_1,g_1),
\max_{1\leq j\leq r,1\leq l\leq l(j)}\rho_{D_2}(f_{2jl},g_{2jl})\right\}.
$$

\begin{lemma}\label{RK-lem-2}
	Let  $L(\uA,\uhh)$ be as in Theorem~\ref{RK-KM-gen-lim}. Then we have
	\begin{align*}
	\lim_{n \to \infty}\limsup_{N \to \infty}\frac{1}{N+1}\sum_{k=0}^{N}
	{\underline \varrho}\big(\underline{Z}({\underline s}+ik\underline{h}, \uA;\ugb),\underline{Z}_n({\underline s}+ik\underline{h}, \uA;\ugb)\big)=0
	\end{align*}
	and, for almost all $\uom \in \uO$,
	\begin{align*}
	\lim_{n \to \infty}\limsup_{N \to \infty}\frac{1}{N+1}\sum_{k=0}^{N}
	{\underline \varrho}\big(\underline{Z}({\underline s}+ik\underline{h}, \uom, \uA;\ugb),\underline{Z}_n({\underline s}+ik\underline{h}, \uom, \uA;\ugb)\big)=0.
	\end{align*}
\end{lemma}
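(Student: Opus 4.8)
The plan is first to reduce the claim to a single‑coordinate statement and then to treat each coordinate by the Mellin‑transform plus contour‑shift technique. Since $\underline{\varrho}$ is the maximum of $\rho_{D_1}$ and of the $\rho_{D_2}$‑distances between the periodic‑Hurwitz coordinates, and since each of $\rho_{D_1},\rho_{D_2}$ is a weighted series $\sum_{\ell\ge 1}2^{-\ell}d_{\ell}/(1+d_{\ell})$ with $d_{\ell}$ the sup‑norm over an exhausting sequence of compact sets $K_{\ell}$ (see \S1.7 of \cite{AL-1996}), the usual $\varepsilon/2$–argument shows it is enough to prove, for every compact $K\subset D_1$,
\begin{align*}
\lim_{n\to\infty}\ \limsup_{N\to\infty}\ \frac{1}{N+1}\sum_{k=0}^{N}\sup_{s\in K}\bigl|\varphi(s+ikh_1)-\varphi_n(s+ikh_1)\bigr|=0,
\end{align*}
the corresponding statements for each pair $\zeta(\cdot,\alpha_j;\gb_{jl}),\zeta_n(\cdot,\alpha_j;\gb_{jl})$ on compact subsets of $D_2$ (with $h_{2j}$ in place of $h_1$), and the four analogous statements with the extra argument $\uom$. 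All eight are of the same shape, so I describe the argument for the first one.

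Starting from the Mellin formula $e^{-x}=\tfrac{1}{2\pi i}\int_{(c)}\Gamma(z)x^{-z}\,dz$ applied to $v_1(m,n)=\exp(-(m/n)^{\sigma_0^*})$, one gets, for large $c$,
\begin{align*}
\varphi_n(s_1)=\frac{1}{2\pi i\,\sigma_0^*}\int_{(c)}\varphi(s_1+w)\,\Gamma\!\Bigl(\tfrac{w}{\sigma_0^*}\Bigr)\,n^{w}\,dw .
\end{align*}
I would then move the line of integration to $\mathrm{Re}\,w=\theta$ for a fixed $\theta<0$ so small that $\mathrm{Re}\,s+\theta$ still lies, for every $s\in K$, in the half‑plane where \eqref{meanvalue-for-varphi} holds (possible since $\min_{s\in K}\mathrm{Re}\,s$ exceeds the abscissa in (iii)). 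One crosses the simple pole of $\Gamma(w/\sigma_0^*)$ at $w=0$, with residue $\varphi(s_1)$, and the finitely many poles $\varrho$ of $\varphi$ in that strip, each producing a term $c_\varrho\,\Gamma((\varrho-s_1)/\sigma_0^*)\,n^{\varrho-s_1}$. Hence $\varphi_n(s_1)-\varphi(s_1)$ is the shifted integral plus a finite sum of such residues, and, using the exponential decay of $\Gamma$ along verticals (which absorbs the polynomial growth in (ii)), one obtains
\begin{align*}
\sup_{s\in K}\bigl|\varphi_n(s+ikh_1)-\varphi(s+ikh_1)\bigr|\ \ll\ n^{\theta}\!\int_{-\infty}^{\infty}\!\bigl|\varphi(\sigma_0+i(kh_1+u))\bigr|\,e^{-c_1|u|}\,du\ +\ R_{n,k},
\end{align*}
where the bounded imaginary parts of $s\in K$ have been absorbed into $u$, and $R_{n,k}$ collects the residue contributions and satisfies $R_{n,k}\ll C(n)\,k^{A}e^{-c_2 k}$ with $C(n)$ depending only on $n$ and $c_2>0$ (the exponential factor coming from $\Gamma((\varrho-s-ikh_1)/\sigma_0^*)$ via Stirling).

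Now average over $0\le k\le N$. The residue part contributes $\tfrac{1}{N+1}\sum_{k\ge0}R_{n,k}\ll C(n)/(N+1)$, which vanishes once $\limsup_{N\to\infty}$ is taken, before $n\to\infty$. For the integral part, interchanging the finite sum with the $u$‑integral and applying Cauchy–Schwarz reduces everything to the discrete second‑moment bound $\sum_{k=0}^{N}|\varphi(\sigma_0+i(kh_1+u))|^{2}\ll(N+1)(1+|u|)^{A'}$, uniform in $N$ and $u$; this follows from (iii) and (ii) (with a Cauchy estimate for $\varphi'$) through Gallagher's lemma for passing from a continuous to a discrete mean square. One then gets $\limsup_{N\to\infty}\tfrac{1}{N+1}\sum_{k=0}^{N}\sup_{s\in K}|\cdots|\ll n^{\theta}\int_{-\infty}^{\infty}e^{-c_1|u|}(1+|u|)^{A'/2}\,du\ll n^{\theta}\to0$. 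The $\zeta(\cdot,\alpha_j;\gb_{jl})$‑coordinates are handled identically, using the classical bound $\int_0^T|\zeta(\sigma+it,\alpha_j;\gb_{jl})|^2\,dt\ll T$ for $\sigma>\tfrac12$; the only new feature is that the pole of $\zeta(\cdot,\alpha_j;\gb_{jl})$ at $1$ adds, during the shift, one more residue term $\tfrac{b_{jl}}{\sigma_0^*}\Gamma((1-s-ikh_{2j})/\sigma_0^*)(n+\alpha_j)^{1-s-ikh_{2j}}$, which grows in $n$ but, exactly as with $R_{n,k}$, is killed by the $1/(N+1)$ together with the exponential decay in $k$.

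For the statements with $\uom$ I would use ergodicity. Since $\underline{\varrho}\le1$ is bounded, the map $\uom\mapsto\underline{\varrho}\bigl(\underline{Z}(\underline{s},\uom,\uA;\ugb),\underline{Z}_n(\underline{s},\uom,\uA;\ugb)\bigr)$ lies in $L^1(\uO,m_H)$, and since $\underline{Z}(\underline{s}+ik\underline{h},\uom,\uA;\ugb)=\underline{Z}(\underline{s},\Phi_{\uA,\uhh}^{k}\uom,\uA;\ugb)$ with $\Phi_{\uA,\uhh}$ ergodic (Lemma~\ref{ergodic}), the Birkhoff–Khintchine theorem gives, for each $n$ and for $m_H$‑almost all $\uom$,
\begin{align*}
\frac{1}{N+1}\sum_{k=0}^{N}\underline{\varrho}\bigl(\underline{Z}(\underline{s}+ik\underline{h},\uom,\uA;\ugb),\underline{Z}_n(\underline{s}+ik\underline{h},\uom,\uA;\ugb)\bigr)\ \longrightarrow\ \beta_n\qquad(N\to\infty),
\end{align*}
with $\beta_n:=\int_{\uO}\underline{\varrho}\bigl(\underline{Z}(\underline{s},\uom,\uA;\ugb),\underline{Z}_n(\underline{s},\uom,\uA;\ugb)\bigr)\,dm_H$; taking the union over $n\in\nn$ of the exceptional null sets makes this hold for almost all $\uom$ at once. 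Finally $\beta_n\to0$: dominated convergence reduces it, as before, to $\int_{\uO}\sup_{s\in K}|\varphi(s,\omega_1)-\varphi_n(s,\omega_1)|\,dm_H\to0$ (and the Hurwitz analogues); bounding the supremum by a Cauchy integral and using Cauchy–Schwarz on $\Omega_1$, this follows from $\int_{\Omega_1}|\varphi(\xi,\omega_1)-\varphi_n(\xi,\omega_1)|^2\,dm_H=\sum_{m\ge1}|c_m|^2(1-v_1(m,n))^2 m^{-2\mathrm{Re}\,\xi}\to0$, valid by dominated convergence in $m$ since $v_1(m,n)\to1$ and $\sum_m|c_m|^2 m^{-2\mathrm{Re}\,\xi}<\infty$ on the contour (this last being exactly the condition making $\varphi(\cdot,\omega_1)$ a.s.\ convergent there). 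I expect the discrete second‑moment estimate $\sum_{k=0}^{N}|\varphi(\sigma_0+i(kh_1+u))|^{2}\ll(N+1)(1+|u|)^{A'}$ to be the main technical obstacle: it is the one place where (ii) and (iii) are genuinely combined, and the $u$‑dependence must be tracked so as to be swallowed by $e^{-c_1|u|}$; the only conceptual subtlety is noticing that the prescribed order of the two limits is precisely what neutralises the otherwise divergent residue terms from the poles.
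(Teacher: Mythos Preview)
Your argument is correct.  For the first half (no $\uom$) you carry out in detail exactly the Mellin-transform plus contour-shift argument that the paper only invokes by citing \cite{RK-KM-2017-Pal} and \cite{AL-RM-2009}; your treatment of the residue terms and of the discrete second moment via Gallagher's lemma is the standard one, and your remark that the prescribed order $\limsup_N$ first, $\lim_n$ second is what kills the $n$-growing residue contributions is precisely the point.

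For the second half (with $\uom$) you take a genuinely different route from the paper's references.  The standard argument, to which the paper defers, first establishes that for almost all $\omega_1$ one has a mean-value estimate $\int_0^T|\varphi(\sigma_0+it,\omega_1)|^2\,dt=O(T)$ (and the analogous bounds for the $\omega$-twisted periodic Hurwitz functions), and then simply repeats the contour-shift argument coordinate by coordinate.  You instead exploit the identity $\underline{Z}(\underline{s}+ik\underline{h},\uom,\uA;\ugb)=\underline{Z}(\underline{s},\Phi_{\uA,\uhh}^{k}\uom,\uA;\ugb)$ together with Lemma~\ref{ergodic} and the Birkhoff--Khintchine theorem to replace the time average by the space average $\beta_n$, and then show $\beta_n\to 0$ directly from the $L^2$-orthogonality of the $\omega_1(m)$ (resp.\ $\omega_{2j}(m)$).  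This is neat and avoids proving the almost-sure mean-value bounds, at the cost of invoking the linear-independence hypothesis on $L(\uA,\uhh)$ already at this stage; in the paper's scheme that hypothesis enters only later, in Lemma~\ref{RK-lem-4}, while Lemma~\ref{RK-lem-2} itself holds without it.  Since the hypothesis is in any case assumed in the lemma statement, your approach is legitimate here, but be aware that it would not transfer to settings where one wants the approximation lemma unconditionally.
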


\begin{proof}
This is a generalization of Lemma 2 of \cite{RK-KM-Pal-2018}. 
In view of the definition of the metric ${\underline \varrho}$, it is sufficient to show
\begin{align}\label{limsupvarphi}
	\lim_{n \to \infty}\limsup_{N \to \infty}\frac{1}{N+1}\sum_{k=0}^{N}
	 \rho_{D_1}\big(\varphi(s_1+ikh_1),\varphi_n(s_1+ikh_1))\big)=0,
\end{align}
\begin{align}\label{limsupvarphiomega}
	\lim_{n \to \infty}\limsup_{N \to \infty}\frac{1}{N+1}\sum_{k=0}^{N}
	 \rho_{D_1}\big(\varphi(s_1+ikh_1,\omega_1),\varphi_n(s_1+ikh_1,\omega_1))\big)=0
\end{align}
for almost all $\omega_1$, and the corresponding results for periodic Hurwitz
zeta-functions.

The proof of \eqref{limsupvarphi} and \eqref{limsupvarphiomega} are included in the proof of
Lemma~3 of \cite{RK-KM-2017-Pal}, though the formulas themselves are not explicitly stated
there.
The corresponding results for periodic Hurwitz zeta-functions are Theorem 4.1 and
Theorem 4.4 of Laurin\v cikas and Macaitien\.e \cite{AL-RM-2009}.
\end{proof}

\medskip

In the third step, we introduce one more probability measure, for almost all $\uom \in \uO$, defined by
$$
P_{N,\uom}(A):=\frac{1}{N+1}\#\bigg\{0\leq k \leq N: \underline{Z}({\underline s}, \uom, \uA;\ugb)\in A\bigg\}, \quad A \in {\mathcal B}(\underline{H}).
$$


\begin{lemma}\label{RK-lem-3}
	Suppose that $L(\uA,\uhh)$ is as in Theorem~\ref{RK-KM-gen-lim}. Then the measures $P_N$ and $P_{N,\uom}$ both converge weakly to the same probability measure (denote it by $P$) on $(\underline{H},{\mathcal B}(\underline{H}))$ as $N \to \infty$.
\end{lemma}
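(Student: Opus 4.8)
The plan is to combine the convergence of the approximating measures $P_n$ (Lemma~\ref{RK-lem-1}), the mean-approximation estimates (Lemma~\ref{RK-lem-2}), and the ergodicity of $\Phi_{\uA,\uhh}$ (Lemma~\ref{ergodic}) by the standard three-step argument for limit theorems in the space of analytic functions. First I would show that the family $\{P_n\}$ is tight. Since $\varphi_n(s_1,\ho_1)$ and $\zeta_n(s_{2jl},\alpha_j,\ho_{2j};\gb_{jl})$ are given by absolutely convergent Dirichlet series with real parts exceeding $\frac12$, a standard Cauchy-estimate bound together with mean-value estimates over the discrete progression $k=0,1,\dots,N$ gives, for each compact $\mathcal{K}\subset D_1$ (resp. $\subset D_2$), a uniform bound on $\frac{1}{N+1}\sum_{k=0}^N\sup_{s\in\mathcal K}|\varphi_n(s+ikh_1)|$ and similarly for the periodic Hurwitz components; exhausting $D_1,D_2$ by such compacta and invoking Prokhorov's theorem yields relative compactness of $\{P_n\}$. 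Hence, passing to a subsequence $\{P_{n_\ell}\}$, there is a probability measure $P$ on $(\uH,{\mathcal B}(\uH))$ with $P_{n_\ell}\Rightarrow P$.

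Next I would promote this to weak convergence of $P_N$ itself. On the probability space $(\uO,{\mathcal B}(\uO),m_H)$ equipped with a random element $\theta_N$ uniformly distributed on $\{0,1,\dots,N\}$, consider the $\uH$-valued random variables $X_{N,n}:=\uZ_n(\underline{s}+i\theta_N\uhh,\uA;\ugb)$ and $X_N:=\uZ(\underline{s}+i\theta_N\uhh,\uA;\ugb)$; by Lemma~\ref{RK-lem-1} $X_{N,n}\Rightarrow P_n$ as $N\to\infty$, by the above $P_{n}\Rightarrow P$ along a subsequence, and by Lemma~\ref{RK-lem-2}, for every $\varepsilon>0$,
\begin{align*}
\lim_{n\to\infty}\limsup_{N\to\infty}\mathbb{P}\big({\underline\varrho}(X_N,X_{N,n})\geq\varepsilon\big)
\leq\lim_{n\to\infty}\limsup_{N\to\infty}\frac{1}{\varepsilon(N+1)}\sum_{k=0}^N{\underline\varrho}\big(\uZ(\underline{s}+ik\uhh,\uA;\ugb),\uZ_n(\underline{s}+ik\uhh,\uA;\ugb)\big)=0.
\end{align*}
Theorem~4.2 of \cite{PB-1968} (the approximation-in-probability theorem) then gives $X_N\Rightarrow P$ as $N\to\infty$, i.e. $P_N\Rightarrow P$; in particular $P$ does not depend on the chosen subsequence.

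For the measure $P_{N,\uom}$ the argument is the same verbatim, using the second halves of Lemmas~\ref{RK-lem-1} and \ref{RK-lem-2} (the statements ``for almost all $\uom$''), so for almost all $\uom\in\uO$ there is a probability measure $P_{\uom}$ with $P_{N,\uom}\Rightarrow P_{\uom}$. It remains to show $P_{\uom}=P$ almost surely. Here is where Lemma~\ref{ergodic} enters: the shift $k\mapsto k+1$ on the progression corresponds to multiplication by $a_{\uA,\uhh}$, i.e. to the transformation $\Phi_{\uA,\uhh}$, which is ergodic under the linear-independence hypothesis. For any bounded continuous $F:\uH\to\rr$, the function $\uom\mapsto\int_{\uH}F\,dP_{\uom}$ is invariant under $\Phi_{\uA,\uhh}$ (shifting the starting point of the progression changes $\frac{1}{N+1}\sum_{k=0}^N F(\uZ(\underline{s}+ik\uhh,\uom,\uA;\ugb))$ by only $O(1/N)$ terms), hence $m_H$-almost everywhere constant; evaluating this constant by integrating over $\uom$ and applying Fubini together with Birkhoff's pointwise ergodic theorem identifies it with $\int_{\uH}F\,dP$. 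Since $F$ was arbitrary, $P_{\uom}=P$ for almost all $\uom$, completing the proof. The main obstacle is the tightness step: one must carefully assemble the Cauchy estimates and the discrete mean-value bounds for all $\lambda+1$ components simultaneously, uniformly in $n$, so that Prokhorov applies on the product space $\uH$.
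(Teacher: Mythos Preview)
The first half of your proof---tightness of $\{P_n\}$, extraction of a subsequential limit $P$ via Prokhorov, and then $P_N\Rightarrow P$ through Theorem~4.2 of \cite{PB-1968} using the first half of Lemma~\ref{RK-lem-2}---matches the paper's argument essentially line for line.

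The divergence, and the gap, is in the second half. You pass to a potentially $\uom$-dependent limit $P_{\uom}$ and then invoke Lemma~\ref{ergodic} and Birkhoff to argue that $P_{\uom}=P$. But this last identification is not what your ergodic step actually delivers: applying Birkhoff--Khintchine to $\frac{1}{N+1}\sum_{k=0}^N F\big(\uZ(\underline{s}+ik\uhh,\uom,\uA;\ugb)\big)$ produces the space average $\int F\,dP_{\uZ}$, not $\int F\,dP$. So your argument in fact shows $P_{\uom}=P_{\uZ}$, which is the content of Lemma~\ref{RK-lem-4}, and does not give $P_{\uom}=P$ unless you already know $P=P_{\uZ}$. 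The claimed ``Fubini'' passage from the constant to $\int F\,dP$ is the unjustified step.

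The paper avoids this detour entirely. The point you overlook is that Lemma~\ref{RK-lem-1} asserts that $P_{N,n}$ and $P_{N,n,\widehat{\uom}}$ converge to the \emph{same} measure $P_n$, for \emph{every} fixed $\widehat{\uom}$. Consequently, rerunning the identical Theorem~4.2 argument for $P_{N,\uom}$---with $\widehat{\underline{X}}_{N,n}\Rightarrow \underline{X}_n$ (the same $\underline{X}_n$ as before), $\underline{X}_n\Rightarrow P$, and the second half of Lemma~\ref{RK-lem-2}---already forces $P_{N,\uom}\Rightarrow P$ for the very same $P$. No ergodicity is needed at this stage; Lemma~\ref{ergodic} enters only in Lemma~\ref{RK-lem-4}, where it is used to identify $P$ with $P_{\uZ}$.
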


\begin{proof}
This is a generalization of Lemma 3 of \cite{RK-KM-Pal-2018}.
Let $(\widetilde{\Omega},{\mathcal B}({\widetilde{\Omega}}),\widetilde{P})$ be a certain
probability space, $\theta_N:\widetilde{\Omega}\to\rr$ be a discrete random variable satisfying
$$\widetilde{P}(\theta_N=k)=(N+1)^{-1} \qquad k=0,\ldots, N,$$
and define the 
$\underline{H}$-valued random element $\underline{X}_{N,n}(\underline{s})$ 
on $\widetilde{\Omega}$ by
$$
\underline{X}_{N,n}(\underline{s})=\underline{Z}_n(\underline{s}+i\theta_N\underline{h},
\uA;\ugb).
$$
Then the distribution of $\underline{X}_{N,n}(\underline{s})$ is clearly $P_{N,n}$.

Let $\underline{X}_n(\underline{s})$ be an
$\underline{H}$-valued random element whose distribution is $P_n$.    Then
Lemma \ref{RK-lem-1} implies that $\underline{X}_{N,n}(\underline{s})$ converges to
$\underline{X}_n(\underline{s})$ in distribution, as $N\to\infty$.

We can show that the family $\{P_n\}$ is tight, in a standard way; see, for example,
pp.269-270 of \cite{RK-AL-2011}.    Therefore by Prokhorov's theorem (see \cite{PB-1968})
we can choose
a subsequence $\{P_{n_k}\}\subset\{P_n\}$, which converges weakly to a certain
probability measure $P$ on $(\underline{H},{\mathcal B}(\underline{H}))$ as $k\to\infty$.
That is, $\underline{X}_{n_k}$ converges to $P$ in distribution as $k\to\infty$.

Define another random element 
$\underline{X}_N(\underline{s})=\underline{Z}(\underline{s}+i\theta_N\underline{h},
\uA;\ugb)$.   Then for any $\varepsilon>0$,
\begin{align*}
&\lim_{n\to\infty}\limsup_{N\to\infty}\widetilde{P}(\underline{\varrho}(
\underline{X}_N(\underline{s}),\underline{X}_{N,n}(\underline{s}))\geq\varepsilon)\\
&=\lim_{n\to\infty}\limsup_{N\to\infty}\frac{1}{N+1}\#\{0\leq k\leq n: 
\underline{\varrho}(\underline{Z}(\underline{s}+i\theta_N\underline{h},
\uA;\ugb),\underline{Z}_n(\underline{s}+i\theta_N\underline{h},
\uA;\ugb))\geq\varepsilon\}\\
&\leq \lim_{n\to\infty}\limsup_{N\to\infty}\frac{1}{\varepsilon(N+1)}\sum_{ k=0}^n 
\underline{\varrho}(\underline{Z}(\underline{s}+ik\underline{h},
\uA;\ugb)-\underline{Z}_n(\underline{s}+ik\underline{h},
\uA;\ugb))=0
\end{align*}
by Lemma \ref{RK-lem-2}.
Therefore we can apply Theorem 4.2 of \cite{PB-1968} to find that
$\underline{X}_N(\underline{s})$ converges to $P$ in distribution, that is, $P_N$
converges weakly to $P$, as $N\to\infty$.
Moreover this fact implies that $P$ does not depend on the choice of the subsequence
$\{P_{n_k}\}$.    Therefore by Theorem 1.1.9 of \cite{AL-1996} we see that $P_n$
converges weakly to $P$ (as $n\to\infty$).

Finally put
$$
\widehat{\underline{X}}_{N,n}(\underline{s})=\underline{Z}_n(\underline{s}+i\theta_N\underline{h},\uom,
\uA;\ugb),\quad
\widehat{\underline{X}}_{N}(\underline{s})=\underline{Z}(\underline{s}+i\theta_N\underline{h},
\uom,\uA;\ugb)
$$
and argue as above.    It follows that 
$\widehat{\underline{X}}_{N,n}(\underline{s})\to \underline{X}_{n}(\underline{s})$
(as $N\to\infty$) in distribution.
We already mentioned above that $\underline{X}_n(\underline{s})\to P$ in distribution.
Also using Lemma \ref{RK-lem-2} we find
$$
\lim_{n\to\infty}\limsup_{N\to\infty}\widetilde{P}(\underline{\varrho}(
\widehat{\underline{X}}_N(\underline{s}),\widehat{\underline{X}}_{N,n}(\underline{s}))\geq\varepsilon) =0.
$$
Therefore again using Theorem 4.2 of \cite{PB-1968} we obtain 
that $P_{N,\uom}$ converges weakly to $P$ as $N\to\infty$.
\end{proof}

The final step of the proof is to identify the measure $P$ in  Lemma~\ref{RK-lem-3}. 

\begin{lemma}\label{RK-lem-4}
	The probability measure $P$ coincides with the probability measure $P_\uZ$.
\end{lemma}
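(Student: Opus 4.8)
The plan is to identify $P$ with $P_{\uZ}$ by combining the ergodicity of the shift $\Phi_{\uA,\uhh}$ (Lemma~\ref{ergodic}) with the Birkhoff--Khinchin ergodic theorem, and then invoking Lemma~\ref{RK-lem-3} together with uniqueness of weak limits. First I would record the algebraic shift identity
$$
\uZ\bigl(\us,\Phi_{\uA,\uhh}^k(\uom),\uA;\ugb\bigr)=\uZ\bigl(\us+ik\uhh,\uom,\uA;\ugb\bigr)\qquad(k\in\nn_0).
$$
This follows from the definition of $a_{\uA,\uhh}$ and the multiplicative extension $\omega_1(m)=\prod_j\omega_1(p_j)^{a_j}$: for every $m$ one has $(a_{\uA,\uhh}^k\uom)_1(m)=m^{-ikh_1}\omega_1(m)$ and $(a_{\uA,\uhh}^k\uom)_{2j}(m)=(m+\alpha_j)^{-ikh_{2j}}\omega_{2j}(m)$, so inserting these factors into the Dirichlet series for $\varphi(s_1,\omega_1)$ and $\zeta(s_{2jl},\alpha_j,\omega_{2j};\gb_{jl})$ simply shifts each complex variable by the corresponding $ikh$. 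Consequently
$$
P_{N,\uom}(A)=\frac{1}{N+1}\sum_{k=0}^{N}\mathbf{1}_A\bigl(\uZ(\us,\Phi_{\uA,\uhh}^k(\uom),\uA;\ugb)\bigr),\qquad A\in{\mathcal B}(\uH).
$$

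Next, for a fixed bounded continuous $g:\uH\to\rr$, I would apply the Birkhoff--Khinchin theorem to the measure-preserving transformation $\Phi_{\uA,\uhh}$ and the $m_H$-integrable (indeed bounded) function $\uom\mapsto g\bigl(\uZ(\us,\uom,\uA;\ugb)\bigr)$, the random element being well defined for almost all $\uom$. Since $\Phi_{\uA,\uhh}$ is ergodic by Lemma~\ref{ergodic}, the ergodic average converges, for almost all $\uom$, to the constant
$$
\int_{\uO}g\bigl(\uZ(\us,\uom,\uA;\ugb)\bigr)\,dm_H=\int_{\uH}g\,dP_{\uZ},
$$
i.e.\ $\int_{\uH}g\,dP_{N,\uom}\to\int_{\uH}g\,dP_{\uZ}$. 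Because $\uH$ is a separable metric space, weak convergence is determined by a fixed countable family $\{g_\ell\}$ of bounded continuous functions; taking the union of the corresponding exceptional null sets yields a single set of full $m_H$-measure on which $\int_{\uH}g_\ell\,dP_{N,\uom}\to\int_{\uH}g_\ell\,dP_{\uZ}$ for every $\ell$, hence $P_{N,\uom}$ converges weakly to $P_{\uZ}$ for almost all $\uom$. On the other hand, Lemma~\ref{RK-lem-3} gives $P_{N,\uom}\Rightarrow P$ for almost all $\uom$; comparing the two limits and using uniqueness of the weak limit forces $P=P_{\uZ}$, which is the assertion of the lemma.

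The step I expect to be the main obstacle is the passage from the scalar ergodic averages to weak convergence of the measures $P_{N,\uom}$ themselves: one must fix a countable convergence-determining class on the infinite-dimensional space $\uH$ and control the dependence of the Birkhoff exceptional sets on the test function, so that a single $m_H$-null set works simultaneously. (Equivalently, one may run the argument with indicator functions of a countable family of continuity sets of $P_{\uZ}$ forming a base; either way it is separability of $\uH$ that makes the argument go through.) The remaining ingredients — the shift identity and the appeals to Lemmas~\ref{ergodic} and~\ref{RK-lem-3} — are routine.
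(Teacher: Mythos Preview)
Your argument is correct and follows the same route the paper indicates: the shift identity plus Lemma~\ref{ergodic} feed into the Birkhoff--Khintchine theorem, and comparison with Lemma~\ref{RK-lem-3} via uniqueness of weak limits yields $P=P_{\uZ}$. The only cosmetic difference is that the ``standard argument'' the paper cites usually runs Birkhoff--Khintchine on the indicator of a single continuity set $A$ of $P$ (getting $P(A)=P_{\uZ}(A)$ directly, then letting $A$ vary), whereas you pass through a countable convergence-determining family of bounded continuous functions to establish $P_{N,\uom}\Rightarrow P_{\uZ}$ first; both variants are equivalent and rely on the separability of $\uH$ in the same way.
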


\begin{proof}
This is a generalization of Lemma 4 of \cite{RK-KM-Pal-2018}.   Using Lemma \ref{ergodic}
and the classical Birkhoff-Khintchine theorem (see \cite{HCR-MRL-1967}),
we just mimic the standard argument (see, for example, the proof of Theorem 4 of
\cite{EB-AL-2015-LMJ}).
\end{proof}

The proof of Theorem~\ref{RK-KM-gen-lim} is completed.

From Theorem~\ref{RK-KM-gen-lim}, we can immediately deduce simpler discrete mixed joint limit theorem when the set $L(\uA,\uhh)$ is replaced by $L(\uA,h)$, 
that is the case $h_1=h_{21}=...=h_{2j}=h$.    Here we give the statement of such discrete mixed joint limit theorem.

\begin{theorem}\label{RK-KM-gen-lim-h}
	Suppose that $\varphi\in\mathcal{M}$, the elements of the set $L(\uA,h)$ are linearly independent over $\qq$ and $\text{rank}(B_j)=l(j)$, $j=1,...,r$. Then $P_N$ converges weakly to
	$P_{\uZ}$
	as $N \to \infty$.
\end{theorem}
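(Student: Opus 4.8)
The plan is to run the four-step argument from the proof of Theorem~\ref{RK-KM-gen-lim}, specialized to the case $\uhh=(h_1,h_{21},\dots,h_{2r})$ with $h_1=h_{21}=\cdots=h_{2r}=h$. With this choice of steps, the random elements $\uZ_n$, $\uZ$ and the measures $P_{N,n}$, $P_{N,\uom}$, $P_N$ are literally the same objects as in \S2--\S3, so Lemmas~\ref{RK-lem-1}, \ref{RK-lem-2}, \ref{RK-lem-3}, \ref{RK-lem-4} will yield $P_N\Rightarrow P=P_{\uZ}$ verbatim, \emph{provided} we re-supply the two inputs they rely on, namely the analogues of Lemma~\ref{on-torus} (weak convergence of the discrete measure on the torus) and Lemma~\ref{ergodic} (ergodicity of the shift), now under the hypothesis that $L(\uA,h)$, rather than $L(\uA,\uhh)$, is linearly independent over $\qq$. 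Lemma~\ref{RK-lem-2} uses no independence hypothesis, so it needs no change; and Lemma~\ref{RK-lem-1} and Lemma~\ref{RK-lem-4} feed in the torus lemma and the ergodicity lemma respectively, while Lemma~\ref{RK-lem-3} is purely measure-theoretic.

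For the torus lemma the relevant step element is $a_{\uA,h}:=\big((p^{-ih}:p\in\pp),((m+\alpha_1)^{-ih}:m\in\nn_0),\dots,((m+\alpha_r)^{-ih}:m\in\nn_0)\big)\in\uO$, and $Q_N$ is $\frac{1}{N+1}\#\{0\le k\le N:a_{\uA,h}^{\,k}\in A\}$. By the standard criterion for weak convergence to the Haar measure (exactly as in the proof of Lemma~\ref{on-torus}, i.e. Lemma~6 of \cite{RK-KM-Pal-2018}), it suffices to check that $\chi(a_{\uA,h})\neq 1$ for every nontrivial character $\chi$ of $\uO$, since then $\frac{1}{N+1}\sum_{k=0}^{N}\chi(a_{\uA,h})^{k}\to 0$. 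Writing $\chi(\uom)=\prod_{p}\omega_1^{k_p}(p)\prod_{m}\prod_{j=1}^{r}\omega_{2j}^{l_{mj}}(m)$ with all but finitely many integers $k_p,l_{mj}$ equal to $0$, one gets
$$
\chi(a_{\uA,h})=\exp\!\Big(-ih\Big(\sum_{p}k_p\log p+\sum_{m}\sum_{j=1}^{r}l_{mj}\log(m+\alpha_j)\Big)\Big),
$$
and $\chi(a_{\uA,h})=1$ would force $\sum_{p}k_p\log p+\sum_{m,j}l_{mj}\log(m+\alpha_j)=n\cdot\frac{2\pi}{h}$ for some $n\in\zz$, a nontrivial $\qq$-linear relation among the elements of $L(\uA,h)$ unless $\chi$ is trivial; hence $Q_N$ converges weakly to $m_H$.

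For the ergodicity lemma, define $\Phi_{\uA,h}(\uom):=a_{\uA,h}\uom$ and argue by the Fourier-transform method exactly as in the proof of Lemma~\ref{ergodic} (see Section~4 of \cite{EB-AL-2015-LMJ}): if $g\in L^2(\uO,m_H)$ is $\Phi_{\uA,h}$-invariant then $\widehat g(\chi)\chi(a_{\uA,h})=\widehat g(\chi)$ for all $\chi$, so $\widehat g(\chi)=0$ whenever $\chi(a_{\uA,h})\neq1$, which by the computation above is every nontrivial $\chi$; thus $g$ is constant $m_H$-a.e. and $\Phi_{\uA,h}$ is ergodic. Plugging these into \S3 (Lemma~\ref{RK-lem-1} via $P_{N,n}=Q_N\circ h_n^{-1}$, and Lemma~\ref{RK-lem-4} via ergodicity plus the Birkhoff--Khintchine theorem to identify $P$) completes the proof, exactly as for Theorem~\ref{RK-KM-gen-lim}.

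There is essentially no obstacle beyond the character bookkeeping just sketched; the only genuinely technical ingredient is the approximation-in-mean Lemma~\ref{RK-lem-2}, but that holds unconditionally and is already available from \cite{RK-KM-2017-Pal} and \cite{AL-RM-2009}, so the equal-steps case requires no new analytic input. I would also note that the extra hypothesis $\text{rank}\,B_j=l(j)$ plays no role in the limit theorem itself --- it enters only later, in the computation of the support of $P_{\uZ}$ needed for the universality theorems --- and is retained in the statement purely for parallelism with Theorems~\ref{RK-KM-general-h} and \ref{RK-KM-general}.
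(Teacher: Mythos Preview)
Your proposal is correct and follows essentially the same route as the paper: specialize Theorem~\ref{RK-KM-gen-lim} to the case $h_1=h_{21}=\cdots=h_{2r}=h$. The paper is terser still---it simply states that Theorem~\ref{RK-KM-gen-lim-h} is ``immediately deduce[d]'' from Theorem~\ref{RK-KM-gen-lim} in this special case, without re-verifying the character computations---whereas you spell out why the linear independence of $L(\uA,h)$ is exactly what is needed for $\chi(a_{\uA,h})\neq 1$ on nontrivial characters; your observation that the hypothesis $\mathrm{rank}\,B_j=l(j)$ is superfluous for the limit theorem (and enters only in the support Lemma~\ref{RK-support}) is also correct.
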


\section{Proof of Theorems~\ref{RK-KM-general-h} and \ref{RK-KM-general}}

For the proof of universality property in the Voronin sense, we need to construct the support of the probability measure $P_\uZ$ in an explicit form.

Assume that $\varphi\in\widetilde{S}$, and let $K_1,K_{2jl},f_1(s), f_{2jl}$, $j=1,...,r$, $l=1,...,l(j)$,  be as in our most general universality result, i.e., Theorem \ref{RK-KM-general}.    Suppose that $M>0$ is a sufficiently large number such that
$K_1$ belongs to
$$
D_M=\big\{s \in \cc:\sigma_0<\sigma<1,|t|<M\big\}.
$$
Since $\varphi\in\widetilde{S}$, then
$D_{\varphi}=\{s \in \cc:\sigma>\sigma_0,\sigma\neq 1\}$. This gives us that $D_M\subset D_{\varphi}$. Moreover we can find $T>0$ such that each $K_{2jl}$ for $j=1,...,r$ and $l=1,...,l(j)$ is a part of
$$
D_T=\bigg\{s \in \cc: \ \frac{1}{2}<\sigma<1,\ |t|<T \bigg\}.
$$

Now we choose $D_1=D_M$ and $D_2=D_T$ and consider
an explicit form of the support $S_{\underline{Z}}$ of the probability measure $P_{\underline{Z}}$.
Let
$S_{\varphi}:=\{f \in H(D_M): f(s)\not =0 \ \text{for} \ D_M,\ \text{or} \ f(s) \equiv 0\}$.

\begin{lemma}\label{RK-support}
	Suppose that the elements of the set $L(\uA,\uhh)$ are linearly independent over $\qq$. Then the support of the measure $P_{\underline{Z}}$ is the set
	$S_{\underline{Z}}:=S_{\varphi}\times H^{\lambda}(D_T)$.
\end{lemma}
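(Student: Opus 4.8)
The plan is to establish the support description by combining the known support statement for the ``$\varphi$-component'' with the known support statement for the periodic Hurwitz components, and then glueing them using the independence of the two groups of random variables implicit in the product structure of $\uO$. First I would recall that under the hypothesis $\varphi\in\widetilde S$ the support of the distribution of the random element $\varphi(s_1,\omega_1)$ on $H(D_M)$ is precisely $S_\varphi=\{f\in H(D_M): f(s)\neq 0 \text{ on } D_M,\text{ or }f\equiv 0\}$; this is the Steuding-type support lemma used in \cite{RK-KM-2015} and \cite{RK-KM-2017-Pal}, and it relies on condition (e) together with the Euler product (d). Next I would invoke the classical fact (due to Bagchi, and in the periodic case to Laurin\v cikas and Macaitien\.e) that, under the condition $\mathrm{rank}\,B_j=l(j)$, the support of the distribution of the vector $\bigl(\zeta(s_{2j1},\alpha_j,\omega_{2j};\gb_{j1}),\ldots,\zeta(s_{2jl(j)},\alpha_j,\omega_{2j};\gb_{jl(j)})\bigr)$ on $H^{l(j)}(D_T)$ is the whole space $H^{l(j)}(D_T)$; the rank condition is exactly what forces the relevant linear combinations of the $b_{mjl}$ to be free, so no non-vanishing constraint survives.

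The key structural point I would emphasise is that the random element $\uZ(\us,\uom,\uA;\ugb)$ is a measurable function of $\uom=(\omega_1,\omega_{21},\ldots,\omega_{2r})$, where these coordinates are \emph{independent} under the Haar measure $m_H$ because $\uO=\Omega_1\times\Omega_{21}\times\cdots\times\Omega_{2r}$ and $m_H$ is the product Haar measure. Moreover the first component $\varphi(s_1,\omega_1)$ depends only on $\omega_1$, and the block of periodic Hurwitz components indexed by $j$ depends only on $\omega_{2j}$. Hence $P_\uZ$ is the product of the $r+1$ marginal distributions, and the support of a product measure on a (separable metric) product space is the product of the supports. Therefore $S_\uZ$ equals $S_\varphi\times\prod_{j=1}^r H^{l(j)}(D_T)=S_\varphi\times H^\lambda(D_T)$, since $\lambda=l(1)+\cdots+l(r)$. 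I would spell out the product-measure support identity carefully: one inclusion ($\mathrm{supp}\,\mu\times\mathrm{supp}\,\nu\subseteq\mathrm{supp}\,(\mu\times\nu)$) uses that a basic open box meeting both supports has positive product measure, while the reverse inclusion uses that the complement of $\mathrm{supp}\,\mu\times\mathrm{supp}\,\nu$ is covered by boxes of the form $U\times H$ or $H\times V$ with $\mu(U)=0$ or $\nu(V)=0$, each of product measure zero, and separability to extract a countable subcover.

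The main obstacle is not the product-measure bookkeeping but the two ingredient support computations, and of these the genuinely delicate one is the $S_\varphi$ statement: one must show that, almost surely with respect to $\omega_1$, the random Dirichlet series $\sum c_m\omega_1(m)m^{-s_1}$ can approximate any prescribed non-vanishing holomorphic target on $D_M$. This goes through the Euler product factorisation $\varphi(s_1,\omega_1)=\prod_p\prod_{j=1}^J(1-a_j(p)\omega_1(p)p^{-s_1})^{-1}$, a rearrangement of the logarithm into a series of independent $H(D_M)$-valued random elements indexed by primes, verification that this series converges (using condition (e) and $\sigma^*\geq\frac12$, so that $D_M$ sits strictly inside the region of almost sure convergence), and then an application of the Pecherski\u\i--type rearrangement theorem in Hilbert space together with the density of finite linear combinations of $p^{-s_1}$ in the relevant Bergman space — the standard Voronin/Bagchi machinery. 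Since the paper states that this support result is already available from the earlier works, in the write-up I would cite it (and the Laurin\v cikas--Macaitien\.e results for the Hurwitz blocks) rather than reprove it, and devote the actual argument to the independence/product-support step, which is where the multi-parameter novelty of this paper genuinely enters.
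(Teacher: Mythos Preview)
Your proposal is correct and follows essentially the same approach as the paper: both proofs factor $P_{\uZ}$ into independent marginals via the product structure $m_H=m_{1H}\times m_{2H}^{\,r}$, cite Steuding's support lemma for the $\varphi$-component and the Laurin\v cikas-type result (under ${\rm rank}\,B_j=l(j)$) for each Hurwitz block, and then take the product of the supports. Your write-up is in fact slightly more careful than the paper's, since you explicitly justify the identity ${\rm supp}(\mu\times\nu)={\rm supp}\,\mu\times{\rm supp}\,\nu$, which the paper leaves implicit in the phrase ``it is sufficient to consider the set of the form $A=A_1\times A_{21}\times\cdots\times A_{2r}$''.
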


\begin{proof}
This is Lemma~5.2 of \cite{RK-KM-2017-BAMS}.   In \cite{RK-KM-2017-BAMS} no proof
is given, so here we present a proof (which is again standard; see, for example, Theorem 5
of \cite{JG-RM-SR-DS-2010}).

We want to find the minimal closed set $A$ which satisfies $P_{\underline{Z}}(A)=1$.
It is sufficient to consider the set of the form
$A=A_1\times A_{21}\times\cdots\times A_{2r}$, where $A_1\in{\mathcal B}(H(D_M))$ and
$A_{2j}\in{\mathcal B}(H(D_T)^{l(j)})$ ($1\leq j\leq r$).   Then
\begin{align*}
&P_{\underline{Z}}(A)=m_H(\uom\in\underline{\Omega}: \underline{Z}(\underline{s},\uom,
\uA;\ugb)\in A)\\
&\quad=m_{1H}(\omega_1\in\Omega_1:\varphi(s_1,\omega_1)\in A_1)\\
&\qquad\times \prod_{j=1}^r m_{2H}(\omega_{2j}:(\zeta(s_{2j1},\alpha_j,\omega_{2j};
\mathfrak{B}_{j1}),\ldots,\zeta(s_{2jl(j)},\alpha_j,\omega_{2j};\mathfrak{B}_{jl(j)}))
\in A_{2j}).
\end{align*}
The condition $P_{\underline{Z}}(A)=1$ implies that all factors on the right-hand side
are equal to 1.
The minimal closed set $A_1$ satisfying 
$m_{1H}(\omega_1\in\Omega_1:\varphi(s_1,\omega_1)\in A_1)=1$ is $S_{\varphi}$
(Lemma 5.12 of Steuding \cite{JST-2007}).    Under the assumption 
${\text{rank}}B_j=l(j)$, the minimal closed set $A_{2j}$ satisfying
$m_{2H}(\omega_{2j}:(\zeta(s_{2j1},\alpha_j,\omega_{2j};
\mathfrak{B}_{j1}),\ldots,\zeta(s_{2jl(j)},\alpha_j,\omega_{2j};\mathfrak{B}_{jl(j)}))
\in A_{2j})=1$ is $H(D_T)^{l(j)}$ for $j=1,\ldots,r$
(Laurin\v cikas \cite{AL-2007}; see also \cite{AL-SS-2009}).
Therefore the minimal $A$ is equal to $S_{\varphi}\times H(D_T)^{\lambda}$.
\end{proof}

\begin{proof}[Proof of Theorem~\ref{RK-KM-general} (and hence of Theorem~\ref{RK-KM-general-h}).]
	The proof of Theorem~\ref{RK-KM-general} we obtain by combining results of Theorem~\ref{RK-KM-gen-lim}, Lemma~\ref{RK-support} and  the Mergelyan theorem on the approximation of analytic functions by polynomials (see \cite{SNM-1952}). We omit the details since
the argument is standard and the same as in Section 4 of \cite{RK-KM-2017-Pal}.
\end{proof}


Finally, we would like to mention that probably it is possible to generalize our Theo\-rem~\ref{RK-KM-general} in one more direction. 	In this paper, we consider a wide collection of periodic Hurwitz zeta-functions $\zeta(s,\alpha_j;\gb_{jl})$, where we attach the collection of sequences $\{\gb_{jl}\}_{1\leq l\leq l(j)}$ to a parameter $\alpha_j$. 
However the common differences $h_{2j}$ is the same for all $\gb_{jl}$, 
$l=1,\ldots,l(j)$.
The new idea is to 
consider the situation when to each $\gb_{jl}$ the attached common difference $h_{2jl}$
may be different.



\begin{thebibliography}{99}


\bibitem{PB-1968}
P.~Billingsley,
{\it Convergence of Probability Measures},
2nd ed., Willey, New York, 1999.

\bibitem{EB-AL-2015-RJ}
E.~Buivydas, A.~Laurin\v cikas,
A discrete version of the Mishou theorem,
{\it Ramanujan J.}, {\bf 38}, No.~2, 331--347 (2015).

\bibitem{EB-AL-2015-LMJ}
E.~Buivydas, A.~Laurin\v cikas,
A generalized joint discrete universality theorem for the Riemann and Hurwitz zeta-functions,
{\it Lith. Math. J.}, {\bf 55}, No.~2, 193--206 (2015).

\bibitem{HCR-MRL-1967}
H.~Cram\' er, M.R.~Leadbetter,
{\it Stationary and Related Stochastic Processes},
Wiley, New York, 1967.

\bibitem{JG-RM-SR-DS-2010}
J.~Genys, R.~Macaitien\.e, S. Ra\v ckauskien\.e, D.~\v Siau\v ci\=unas,
A mixed joint universality theorem for zeta-functions,
{\it Math. Modell. Anal.} \textbf{15}(4), 431-446 (2010).





\bibitem{RK-BRK-2018}
R.~Ka\v cinskait\. e, Br.~Kazlauskait\. e,
Two results related to the universality of zeta-functions with periodic coeffcients,
{\it Results Math.}, {\bf 73}, 73--95 (2018).

\bibitem{RK-AL-2011}
R.~Ka\v cinskait\. e, A.~Laurin\v cikas,
The joint distribution of periodic zeta-functions,
{\it  Studia Sci. Math. Hungar.}, \textbf{48} (2),  257--279  (2011).
	
\bibitem{RK-KM-2015}
R.~Ka\v cinskait\.e, K.~Matsumoto,
The mixed joint universality for a class of zeta-functions,
{\it Math. Nachr.}, {\bf 288}, No.~16, 1900--1909 (2015).

\bibitem{RK-KM-2017-BAMS}
R.~Ka\v cinskait\.e, K.~Matsumoto,
Remarks on the mixed joint universality for a class of zeta-functions,
{\it Bull. Austral. Math. Soc.}, {\bf 98}, No. 2, 187--198 (2017).

\bibitem{RK-KM-2017-Pal}
R.~Ka\v cinskait\.e, K. Matsumoto,
On mixed joint discrete universality for a class of zeta-functions,
in {\it ``Anal. Probab. Methods in Number Theory''}, Proc. of 6th Intern. Conf. Palanga,
A.~Dubickas et al. (eds.), Vilnius University Publ. House, Vilnius, 51--66, 2017.


\bibitem{RK-KM-Pal-2018}
R.~Ka\v cinskait\.e, K. Matsumoto,
On mixed joint discrete universality for a class of zeta-func\-tions~II,
{\it Lith. Math. J.}, {\bf 59}, No.~1, 54--66 (2019). 

\bibitem{AL-1996-Bordo}
A. Laurin\v cikas,
Limit theorems for the Matsumoto zeta-function,
{\it J. Th\'eor. Nombres Bordeaux}, {\bf 8}, 143--158 (1996).

\bibitem{AL-1996}
A.~Laurin\v cikas,
{\it Limit Theorems for the Riemann Zeta-Function},
 Kluwer Academic Publisher, Dordrecht etc., 1996.


\bibitem{AL-2007}
A.~Laurin\v cikas,
Voronin-type theorem for periodic Hurwitz zeta-functions,
{\it Mat. Sb.}, {\bf 198}, 91-102 (2007) (ijn Russian) =
{\it Sb. Math.}, {\bf 198}, 231-242 (2007).

\bibitem{AL-2010}
A.~Laurin\v cikas,
 Joint universality of zeta functions with periodic coefficients,
{\it  Izv. Ross. Akad. Nauk Ser. Mat.}, \textbf{74}(3), 79--102 (2010) (in Russian)
 = {\it Izv. Math.}, \textbf{74}(3), 515--539 (2010).

\bibitem{AL-2014}
A.~Laurin\v cikas,
A discrete universality theorem for the Hurwitz zeta-function,
{\it J. Number Theory}, \textbf{143}, 232-247 (2014).



\bibitem{AL-RG-2002}
A.~Laurin\v cikas, R.~Garunk\v stis,
{\it The Lerch Zeta-function},
Kluwer Academic Publishers, Dordrecht etc., 2002.

\bibitem{AL-RM-2009}
A.~Laurin\v cikas, R.~Macaitien\. e,
The discrete universality of the periodic Hurwitz zeta function,
{\it Integr. Transf. Spec. Funct.}, \textbf{20}, 673--686 (2009).




\bibitem{AL-SS-2009}
A.~Laurin\v cikas, S. Skerstonait\.e,
Joint universality for periodic Hurwitz zeta-functions II,
in: {\it New Directions in Value-Distribution Theory of Zeta and L-Functions},  Proc. of Conference, W\" urzburg, Germany, 2008, Shaker Verlag, Aachen, 149--159 (2009).


	

\bibitem{SNM-1952}
S.N.~Mergelyan,
Uniform approximations to functions of a complex variable,
{\it Usp. Mat. Nauk, Ser.~7}, {\bf 2}, 31--122 (1952) (in Russian) =
{\it Am. Math. Soc. Transl.}, {\bf 101}, 99~p. (1954).

\bibitem{HM-2007}
H.~Mishou,
The joint value-distribution of the Riemann zeta-function and Hurwitz zeta-functions,
 {\it Liet. Matem. Rink.}, {\bf 47}(1), 62--807 (2007) =  {\it Lith. Math. J.}, {\bf  47}(1),  32--47 (2007).

\bibitem{HLM-1971}
H.L.~Mongomery,
{\it Topics in Multiplicative Number Theory},
Lecture Notes Math., \textbf{227}, Springer Verlag, Berlin etc., 1971.





\bibitem{JST-JS-2006}
J.~Sander, J.~Steuding,
Joint universality for sums and products of {Dirichlet} {$L$}-functions,
{\it Analysis (Munich)}, \textbf{26}(3), 295--312 (2006).


\bibitem{JST-2007}
J.~Steuding,
{\it Value-Distribution of $L$-Functions},
Lecture Notes Math., {\bf 1877}, Springer Verlag, Berlin etc., 2007.

\bibitem{SMV-1975}
S.M.~Voronin,
Theorem on the ``universality'' of the Riemann zeta-function,
{\it Izv. Acad. Nauk SSSR Ser. Mat.}, \textbf{ 39}, 475--486 (1975) (in Russian) = {\it Math. USSR Izv.}, \textbf{ 9}, 443--453 (1975).

\end{thebibliography}
\end{document}